 \newtheorem{thm}{Theorem}
 \newtheorem{lem}[thm]{Lemma}
 \newtheorem{cor}[thm]{Corollary}
 \newtheorem{prop}[thm]{Proposition}
 \newtheorem{rmk}{Remark}
\newcommand{\field}[1]{\mathbb{#1}}
\begin{document}

\author{J. C. Meyer and D. J. Needham}

\title{The evolution to localized and front solutions in a non-Lipschitz reaction-diffusion Cauchy problem with trivial initial data}

\maketitle

\begin{abstract}
In this paper, we establish the existence of spatially inhomogeneous classical self-similar solutions to a non-Lipschitz semi-linear parabolic Cauchy problem with trivial initial data. Specifically we consider bounded solutions to an associated two-dimensional non-Lipschitz non-autonomous dynamical system, for which, we establish the existence of a two-parameter family of homoclinic connections on the origin, and a heteroclinic connection between two equilibrium points. Additionally, we obtain bounds and estimates on the rate of convergence of the homoclinic connections to the origin.

\end{abstract}



\section{Introduction}
\label{sec1}

In this paper, we study classical bounded solutions $u:\field{R}\times [0,T]\to\field{R}$ to the non-Lipschitz semi-linear parabolic Cauchy problem 
\begin{equation} \label{i1} u_t - u_{xx} = u|u|^{p-1} \ \ \ \text{ on } \field{R}\times (0,T] , \end{equation}
\begin{equation} \label{i2} u=0\ \ \  \text{ on } \field{R}\times \{ 0 \} , \end{equation}
with $0<p<1$ and $T>0$ (which we henceforth refer to as [CP]). The primary achievement of the paper is the establishment of the existence of a two-parameter family of localized spatially inhomogeneous solutions to [CP] for which {\mbox{$u(x,t)\to 0$}} as $|x|\to\infty$ uniformly for $t\in [0,T]$; the secondary achievement of the paper is the establishment of front solutions to [CP], which approach $\pm (1-p)^{1/(1-p)}t$ as $|x|\to \pm \infty$ uniformly for $t\in [0,T]$. We note here that for $p\geq 1$ in \eqref{i1}, then the unique bounded classical solution with initial data \eqref{i2} is the trivial solution, see for example {\mbox{\cite[Theorem 4.5]{JMDN1}.}} 

Qualitative properties of non-negative (non-positive) solutions to \eqref{i1} when {\mbox{$0<p<1$,}} with non-negative (non-positive) initial data, and for which $u(x,t)$ is bounded as $|x|\to\infty$ uniformly for $t\in [0,T]$, have been determined in \cite{JAME1}, \cite{ACKDJN1}, \cite{DN1}, \cite{PMMJALDJN1} and \cite{JMDN2}. However, we note that any non-negative (non-positive) classical bounded solution to [CP] must be spatially homogeneous for $t\in [0,T]$, see for example \cite[Corollary 2.6]{JAME1}. Thus, the solutions constructed in this paper are two signed on $\field{R}\times [0,T]$. The authors are currently unaware of any studies of two signed solutions to \eqref{i1}-\eqref{i2} with $0<p<1$. Generic local results for spatial homogeneity of solutions to semi-linear parabolic Cauchy problems with homogeneous initial data depend upon uniqueness results, see for example, \cite{JMDN2}. For results concerning the related problem of asymptotic homogeneity (in general, asymptotic symmetry) as $t\to \infty$ of non-negative (non-positive) global solutions to semi-linear parabolic Cauchy problems, we refer the reader to the survey article \cite{PP1}. 

Non-negative (non-positive), spatially inhomogeneous solutions to \eqref{i1} for $p>1$ have been considered in \cite{AHFW1}, \cite{FW1} \cite{FW2}, \cite{DNPGC1}, \cite{MEOK1}, \cite{MHEY1}, \cite{PPEY1}, \cite{CDMH1}, \cite{TCFDFW2} and \cite{NSKW1} with the focus primarily on critical exponents for finite time blow-up of solutions, and conditions for the existence of global solutions (see the review articles \cite{HL1} and \cite{KDHL1}). Moreover, for $p>1$, solutions to \eqref{i1} with two signed initial data have been considered in \cite{NMEY1} and \cite{NMEY2}, whilst boundary value problems have been studied in \cite{JB1} and \cite{TCFDFW1}. 

The paper is structured as follows; in Section 2 we introduce the self-similar solution structure for [CP], and hence, determine an ordinary differential equation related to \eqref{i1}; the remainder of the paper concerns the study of particular solutions to this ordinary differential equation, which is re-written as an equivalent two-dimensional non-autonomous dynamical system. Specifically, in Section 3 we establish the existence of a two-parameter family of homoclinic connections on the equilibrium $(0,0)$. Additionally, we determine bounds and estimates on the asymptotic approach of these solutions to $(0,0)$. In Section 4, we establish the existence of a heteroclinic connection between the equilibrium points {\mbox{$(\pm (1-p)^{1/(1-p)},0)$}}. 

\section{Self-Similar Structure}
With $0<p<1$ and $T>0$, we refer to $u:\field{R}\times [0,T]\to\field{R}$ as a solution to [CP] when $u$ satisfies \eqref{i1}-\eqref{i2} with regularity,
\begin{equation} \label{2a} u\in L^{\infty}(\field{R}\times [0,T])\cap C(\field{R}\times [0,T])\cap C^{2,1}(\field{R}\times (0,T]) . \end{equation}
Observe that $u^\pm:\field{R}\times [0,T]\to\field{R}$ given by 
\[ u^\pm(x,t)= \pm ((1-p)t)^{1/(1-p)} \ \ \ \forall (x,t)\in\field{R}\times [0,T] \]
are the maximal and minimal solutions to [CP] (see \cite[Chapter 8]{JMDN1}), and hence any solution $u:\field{R}\times [0,T] \to\field{R}$ to [CP] must satisfy,
\begin{equation} \label{2b} u^-(x,t) \leq u(x,t) \leq u^+(x,t)\ \ \   \forall (x,t)\in\field{R}\times [0,T] .\end{equation}
To construct spatially inhomogeneous solutions to [CP], we consider, for any fixed $x_0\in\field{R}$, self-similar solutions $u:\field{R}\times [0,T]\to\field{R}$ of the form,
\begin{equation} \label{2c} u(x,t) = \begin{cases} w\left(\frac{x-x_0}{t^{1/2}} \right) t^{1/(1-p)} &, (x,t) \in \field{R}\times (0,T], \\ 0 &, (x,t) \in \field{R} \times \{ 0 \} , \end{cases} \end{equation}
with $w:\field{R}\to\field{R}$ to be determined. Now, $u:\field{R}\times [0,T]\to\field{R}$ given by \eqref{2c} is a solution to [CP] if and only if there exist constants $\alpha$, $\beta\in\field{R}$ such that $w:\field{R}\to\field{R}$ satisfies the following zero-value problem, namely, 
\begin{align}
\label{z1} & w'' + \frac{1}{2}\eta w' + w|w|^{p-1} - \frac{1}{(1-p)} w = 0 \ \ \ \forall \eta \in \field{R} ,\\
\label{z2} & w(0) = \alpha ,\ \ \ w'(0) = \beta ,\\
\label{z3} & w\in C^2(\field{R})\cap L^\infty (\field{R}) .
\end{align}
Here $\eta = (x-x_0)/t^{1/2}$, and we observe that the ordinary differential equation \eqref{z1} is both non-autonomous and non-Lipschitz. It is convenient to introduce 
\[ x=w,\ \ \ y=w',\]
after which the problem \eqref{z1}-\eqref{z3} is equivalent to the zero-value problem for the two-dimensional, non-Lipschitz, non-autonomous, dynamical system, 
\begin{align}
\label{3} & x'= y \\
\label{4} & y'= \frac{1}{(1-p)}x - x|x|^{p-1} - \frac{1}{2}\eta y \ \ \ \forall \eta \in\field{R}, \\
\label{z3'} & (x(0),y(0)) = (\alpha , \beta ), \\
\label{z3''} & (x,y)\in C^1(\field{R})\cap L^\infty (\field{R}) .
\end{align}
We refer to the equivalent zero-value problems in \eqref{z1}-\eqref{z3} and \eqref{3}-\eqref{z3''} as (S). Our objective is now to investigate those $(\alpha , \beta )\in \field{R}^2$ for which (S) has a non-trivial solution. It is instructive to note, at this stage, via \eqref{2b}, that we may conclude that any solution to (S) must satisfy the inequality,
\begin{equation} \label{2b'} -(1-p)^{\frac{1}{(1-p)}} \leq w(\eta ) \leq (1-p)^{\frac{1}{(1-p)}} \ \ \ \forall \eta \in\field{R} ,\end{equation}
whilst, following \cite[Corollary 2.6]{JAME1}, any non-constant solution to (S) must be two-signed in $w$.

\section{Homoclinic Connections} \label{hom}
In this section we establish the existence of a two parameter family of homoclinic connections for (S) on the equilibrium point $(0,0)$ of the dynamical system \eqref{3}-\eqref{4}, and establish decay rates to the equilibrium point $(0,0)$ as $|\eta |\to\infty$ on these homoclinic connections. 
\subsection{Existence}
In this subsection, we establish the existence of homoclinic connections attached to the equilibrium point {\mbox{$(x,y)=(0,0)$}} of the dynamical system \eqref{3}-\eqref{4}. To begin, observe that ${\bf{Q}}:\field{R}^3\to\field{R}^2$, where
\begin{equation} \label{5} {\bf{Q}} (x,y,\eta ) = (Q_1,Q_2)(x,y,\eta )= \left( y, \frac{1}{(1-p)}x - x|x|^{p-1} - \frac{1}{2} \eta y \right) \ \ \ \forall (x,y,\eta )\in \field{R}^3 \end{equation}
is such that ${\bf{Q}}\in C(\field{R}^3)$, but also that ${\bf{Q}}$ is not locally Lipschitz continuous on $\field{R}^3$ (note that ${\bf{Q}}$ is locally Lipschitz continuous on $\field{R}^3 \backslash N$, with $N$ any neighbourhood of the plane $x=0$). We now have,

\begin{thm} \label{thm1}
The problem (S) with zero-value $(\alpha , \beta )\in\field{R}^2$ has a solution for $\eta \in [-\delta , \delta ]$ (not necessarily unique), where $\delta = 1/ (1+M)$ and 
\[ M= \max_{(x,y,\eta )\in R} |{\bf{Q}} (x,y,\eta )| \]
with 
\[ R= \{ (x,y,\eta )\in\field{R}^3 : |x-\alpha | \leq 1 ,\ |y-\beta |\leq 1 ,\ |\eta |\leq 1 \} .\]
\end{thm}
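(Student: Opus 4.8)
The plan is to recognise Theorem \ref{thm1} as an instance of the classical Peano local existence theorem for the system \eqref{3}--\eqref{4}, adapted to the fact that $\mathbf{Q}$ is continuous but, as already noted, not locally Lipschitz on any neighbourhood of the plane $x=0$. Because the Lipschitz property fails, Picard iteration and the contraction mapping principle are unavailable (and indeed uniqueness should not be expected, consistent with the parenthetical ``not necessarily unique''). The natural route is therefore to recast the initial value problem as a fixed-point problem for a compact integral operator and to invoke a compactness-based fixed-point theorem. Concretely, a pair $(x,y)\in C([-\delta,\delta];\field{R}^2)$ with $(x(0),y(0))=(\alpha,\beta)$ solves \eqref{3}--\eqref{z3'} on $[-\delta,\delta]$ if and only if it is a fixed point of the operator $T$ defined by
\[ (T(x,y))(\eta) = (\alpha,\beta) + \int_0^\eta \mathbf{Q}(x(s),y(s),s)\,ds , \qquad \eta\in[-\delta,\delta] , \]
since the fundamental theorem of calculus then upgrades any continuous fixed point to a solution with the regularity \eqref{z3''} on the (compact) interval $[-\delta,\delta]$.

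Next I would set up the domain of $T$. Let $X$ be the set of $(x,y)\in C([-\delta,\delta];\field{R}^2)$ satisfying $(x(0),y(0))=(\alpha,\beta)$ together with $|x(\eta)-\alpha|\le 1$ and $|y(\eta)-\beta|\le 1$ for all $\eta\in[-\delta,\delta]$; equipped with the supremum norm this is a nonempty, closed, bounded, convex subset of the Banach space $C([-\delta,\delta];\field{R}^2)$. The key role of the choice $\delta=1/(1+M)$ is that it simultaneously forces $|\eta|\le\delta\le 1$ (so that the graph of any element of $X$ lies in the compact box $R$ on which $M$ is defined) and $M\delta\le M/(1+M)\le 1$. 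Using $|\mathbf{Q}|\le M$ on $R$, the estimate $|(T(x,y))(\eta)-(\alpha,\beta)|\le M|\eta|\le M\delta\le 1$ then shows $T(X)\subseteq X$, so $T$ is a self-map of $X$.

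Finally I would establish the compactness and continuity needed for Schauder's fixed-point theorem. Equicontinuity of $T(X)$ is immediate from $|(T(x,y))(\eta_1)-(T(x,y))(\eta_2)|\le M|\eta_1-\eta_2|$, and combined with uniform boundedness this gives, by the Arzel\`a--Ascoli theorem, that $T(X)$ is relatively compact in $C([-\delta,\delta];\field{R}^2)$. Continuity of $T$ on $X$ follows from the uniform continuity of $\mathbf{Q}$ on the compact set $R$: if $(x_n,y_n)\to(x,y)$ uniformly in $X$, then $\mathbf{Q}(x_n(\cdot),y_n(\cdot),\cdot)\to\mathbf{Q}(x(\cdot),y(\cdot),\cdot)$ uniformly, whence $T(x_n,y_n)\to T(x,y)$ uniformly. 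Schauder's theorem then yields a fixed point $(x,y)\in X$, which is the required solution on $[-\delta,\delta]$. (An equivalent route avoiding Schauder would construct Euler polygons or Tonelli approximants and extract a uniformly convergent subsequence via the same Arzel\`a--Ascoli bounds, passing to the limit in the integral equation by uniform continuity of $\mathbf{Q}$ on $R$.) I expect the main obstacle to be precisely this reliance on compactness rather than contraction: the failure of Lipschitz continuity means continuity of $T$ must be argued directly from the uniform continuity of $\mathbf{Q}$ on $R$ rather than obtained for free from a Lipschitz bound, and it is this step — together with the verification that the single constant $\delta=1/(1+M)$ controls both the $\eta$-range and the excursion away from $(\alpha,\beta)$ — that carries the real content of the argument.
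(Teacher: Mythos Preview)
Your proposal is correct and is essentially the same approach as the paper's, which simply cites the Cauchy--Peano Local Existence Theorem (since $\mathbf{Q}\in C(\field{R}^3)$) without further detail. You have merely unpacked that citation by reproducing the standard Schauder/Arzel\`a--Ascoli proof of Cauchy--Peano in this specific setting, which is entirely acceptable.
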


\begin{proof}
This follows immediately from the Cauchy-Peano Local Existence Theorem (see \cite[Chapter 1, {\mbox{Theorem 1.2}}]{CODLEV1}) since ${\bf{Q}}:\field{R}^3\to\field{R}^2$ is such that {\mbox{${\bf{Q}}\in C(\field{R}^3 )$.}}
\end{proof}

\begin{rmk} \label{rmk2}
When $\alpha \not= 0$, then the solution to (S) with zero-value $(\alpha , \beta )\in \field{R}^2$ is unique for $\eta \in [ -\delta ' , \delta ' ]$ for some $0 < \delta ' \leq \delta $. In addition,  the problem (S) with zero-value \mbox{$(\pm (1-p)^{1/(1-p)} , 0 )$} has the unique global solution
\begin{equation} \label{18*} (x(\eta ) , y(\eta )) = (\pm (1-p)^{1/(1-p)} , 0 ) \ \ \ \forall \eta \in \field{R}.\end{equation}
This follows since ${\bf{Q}}$ is locally Lipschitz in a neighbourhood of $(\pm (1-p)^{1/(1-p)},0)$ respectively. Also, the problem (S) with zero-value (0,0) has the unique global solution,
\[ (x(\eta ),y(\eta )) = (0,0) \ \ \ \forall \eta \in \field{R} . \]
In this case uniqueness does not follows immediately, since ${\bf{Q}}$ is not locally Lipschitz continuous in any neighborhood of $(0,0)$, but instead follows after further qualitative results have been established for solutions to (S) (see Remark \ref{rmk6}). 
\end{rmk}
\noindent We now introduce the function $V:\field{R}^2\to\field{R}$ defined by,
\begin{equation} \label{7} V(x,y) = \frac{1}{2} y^2 - \frac{1}{2(1-p)} x^2 +\frac{1}{(1+p)}|x|^{1+p} \ \ \ \forall (x,y)\in\field{R}^2. \end{equation}
We observe immediately that 
\begin{equation} \label{8} V\in C^{1,1}(\field{R}^2) ,\end{equation}
with 
\begin{equation} \label{10} \nabla V(x,y) = \left( \frac{-1}{(1-p)} x + x|x|^{p-1} , y \right) \ \ \ \forall (x,y)\in\field{R}^2 . \end{equation}
We now examine  the structure of the level curves of $V$ in $\field{R}^2$, namely, the family of curves in $\field{R}^2$ defined by
\begin{equation} \label{9} V(x,y) = c , \end{equation}
for $-\infty < c <\infty$. It is straightforward to establish that the family of level curves of $V$ are qualitatively as sketched in Figure 1, with $\cal{H}$ representing the two level curves connecting $(-(1-p)^{1/1-p},0)$ to $((1-p)^{1/1-p},0)$ and enclosing the origin. In Figure \ref{fig1}, on the red curve $V=(1-p)^{2/(1-p)}/(2(1+p))$, whilst on the blue curves $V=0$. At $(\pm (1-p)^{1/(1-p)} ,0)$ then $V=(1-p)^{2/(1-p)}/(2(1+p))$, whilst at $(0,0)$ then $V=0$. Inside $\cal{H}$, the level curves are simple closed curves concentric with the origin $(0,0)$, and $V$ is increasing from $V=0$ at the origin $(0,0)$, as each level curve is crossed, when moving out from the origin $(0,0)$ to the boundary curve $\cal{H}$, on which $V=(1-p)^{2/(1-p)}/(2(1+p))$. Thus, inside $\cal{H}$, $V$ has a minimum at the origin $(0,0)$ and is increasing on moving radially away from the origin $(0,0)$ to the boundary $\cal{H}$. On the level curves exterior and above or below $\cal{H}$, then $V>(1-p)^{2/(1-p)}/(2(1+p))$, whilst on the level curves to the left and right side of $\cal{H}$, then $V<(1-p)^{2/(1-p)}/(2(1+p))$, with $V=0$ on the blue level curves.  
\begin{figure}
  \centering
    \includegraphics[scale=0.2]{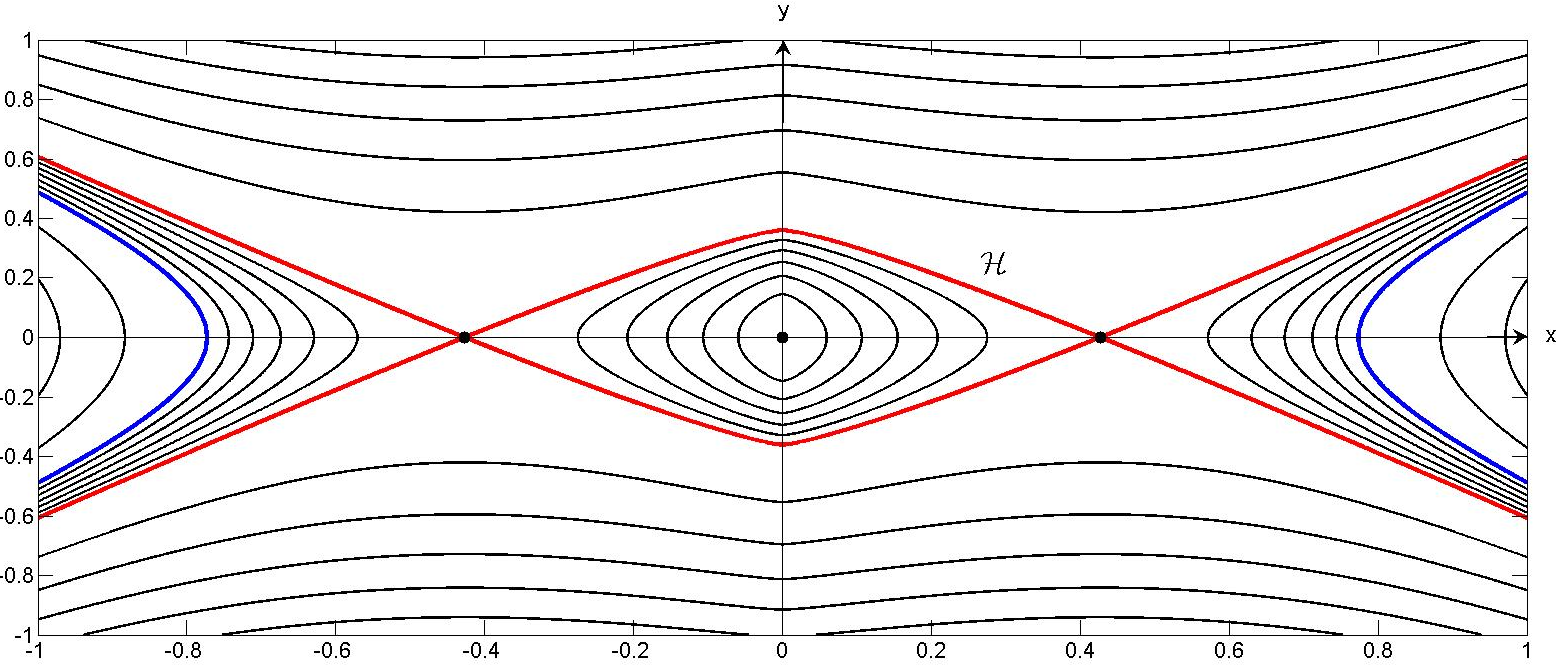}  \label{fig1}
		\caption{A qualitative sketch of the level curves of $V$}
\end{figure}
We now focus on the level curves of $V$ on and inside $\cal{H}$, which have
\begin{equation} \label{12} 0 \leq c \leq c^*(p) ,\end{equation}
where 
\begin{equation} \label{13} c^*(p)=\frac{(1-p)^{2/(1-p)}}{2(1+p)} . \end{equation}
These are concentric closed curves surrounding the origin $(0,0)$. We will label the interior of the level curve $V=c$ by $D_c$, with the level curve $V=c$ labelled as $\partial D_c$, for $0 \leq c \leq c^*(p)$. In addition, we label the set 
\[ \bar{D}_{c^*(p)}' = \bar{D}_{c^*(p)} \backslash \{ (\pm (1-p)^{1/(1-p)} , 0) , (0,0)\} .\]
Now let $(x^*(\eta ) , y^*(\eta ))$ be any solution to (S) for $\eta \in [-E,E]$ (any $E>0$) with zero-value $(\alpha , \beta )\in \field{R}^2$, and define $F:[-E,E]\to\field{R}$ as,
\begin{equation} \label{14} F(\eta ) = V(x^*(\eta ) , y^*(\eta )) \ \ \ \forall \eta \in [-E,E]. \end{equation}
Then $F\in C^1([-E,E])$, and via \eqref{3}, \eqref{4} and \eqref{5}, 
\begin{align*}
F'(\eta )  = & \nabla V (x^*(\eta ),y^*(\eta )) . (x^{*'}(\eta ) , y^{*'} (\eta ))  \\
 = & \nabla V (x^*(\eta ),y^*(\eta )) . {\bf{Q}}(x^*(\eta ) , y^*(\eta ) , \eta ) \ \ \ \forall \eta \in [ -E,E].
\end{align*}
It then follows, via \eqref{10} and \eqref{5} that,
\begin{equation} \label{15} F'(\eta ) = -\frac{1}{2} \eta (y^*(\eta ))^2\ \ \ \forall\eta \in [-E,E]. \end{equation}
It follows from \eqref{15} that
\begin{align}
\label{16} & F(\eta ) \text{ is non-increasing for }\eta \in [0,E],\\
\label{17} & F(\eta ) \text{ is non-decreasing for }\eta \in [-E,0].
\end{align}
We can now establish the following,

\begin{lem} \label{thm3}
Let $(x^* (\eta ),y^*(\eta ))$ be any solution to (S) on $[-E,E]$ (any $E>0$) with zero-value $(\alpha , \beta )\in \bar{D}_{c^*(p)}'$. Then 
\[ (x^*(\eta ) , y^*(\eta ))\in D_c \ \ \ \forall \eta \in [-E,E]\backslash \{ 0 \}, \]
where $c=V(\alpha , \beta )$.
\end{lem}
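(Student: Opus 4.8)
The plan is to treat $F(\eta)=V(x^*(\eta),y^*(\eta))$ as a Lyapunov-type functional and to exploit the monotonicity \eqref{16}--\eqref{17} together with the sign structure of $F'$ in \eqref{15}. First I would record two preliminary facts about the zero-value. Since $(\alpha,\beta)\in\bar{D}_{c^*(p)}'$ and the only point of $\bar{D}_{c^*(p)}$ at which $V$ attains its minimum value $0$ is the origin, which has been excluded, the value $c=V(\alpha,\beta)$ satisfies $c\in(0,c^*(p)]$; in particular $D_c$ is a genuine open region enclosing the origin and $(\alpha,\beta)\in\partial D_c$. Moreover, from \eqref{10} the gradient $\nabla V$ vanishes precisely at $(0,0)$ and $(\pm(1-p)^{1/(1-p)},0)$, i.e.\ exactly at the three points removed in forming $\bar{D}_{c^*(p)}'$; hence $\nabla V(\alpha,\beta)\neq 0$ and $(\alpha,\beta)$ is a regular point of $V$.

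The crux is a strict-inequality (rigidity) step: I claim that $F(\eta)<c$ for every $\eta\in[-E,E]\setminus\{0\}$. Suppose, for contradiction, that $F(\eta_0)=c$ for some $\eta_0\in(0,E]$. By \eqref{16}, $F$ is non-increasing on $[0,E]$ with $F(0)=c$, so $F(0)\geq F(\eta)\geq F(\eta_0)$ forces $F\equiv c$, and hence $F'\equiv 0$, on $[0,\eta_0]$. By \eqref{15} this gives $\eta(y^*(\eta))^2\equiv 0$, so $y^*\equiv 0$ on $(0,\eta_0]$, and by continuity on $[0,\eta_0]$; in particular $\beta=y^*(0)=0$. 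Then \eqref{3} makes $x^*\equiv\alpha$ constant there, and substituting $y^*\equiv 0$, $x^*\equiv\alpha$ into \eqref{4} yields $\tfrac{1}{1-p}\alpha-\alpha|\alpha|^{p-1}=0$, whence $\alpha\in\{0,\pm(1-p)^{1/(1-p)}\}$. Together with $\beta=0$ this places $(\alpha,\beta)$ among the three excluded equilibria, contradicting $(\alpha,\beta)\in\bar{D}_{c^*(p)}'$. The identical argument with \eqref{17} in place of \eqref{16} rules out $F(\eta_0)=c$ for $\eta_0\in[-E,0)$. I expect this rigidity step to be the main obstacle, since it is the place where the precise geometry of the excluded set $\bar{D}_{c^*(p)}'$ enters: the factor $\eta$ in \eqref{15} means $F'(0)=0$ regardless, so the naive first-order argument is degenerate and one must instead use that a flat segment of $F$ would force the orbit to be an equilibrium.

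Finally I would upgrade the pointwise bound $F(\eta)<c$ to the containment $(x^*(\eta),y^*(\eta))\in D_c$ by a connectedness argument. The set $\{V<c\}$ is open and, by the level-curve structure, $D_c$ is one of its connected components, hence clopen in $\{V<c\}$. The map $\eta\mapsto(x^*(\eta),y^*(\eta))$ sends the connected interval $(0,E]$ continuously into $\{V<c\}$, so its image lies in a single component. To identify that component as $D_c$, I would use that $(\alpha,\beta)$ is a regular point of $V$: by the implicit function theorem there is a neighbourhood $W$ of $(\alpha,\beta)$ in which $\{V=c\}$ is a $C^1$ arc with $\{V<c\}\cap W$ connected, and since $(\alpha,\beta)\in\partial D_c$ this local sublevel piece is contained in $D_c$. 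As $(x^*(\eta),y^*(\eta))\to(\alpha,\beta)$ when $\eta\to 0^+$, the solution enters $W\cap\{V<c\}\subseteq D_c$ for small $\eta>0$, so the whole image on $(0,E]$ lies in $D_c$; the same reasoning on $[-E,0)$ completes the proof. The remaining work is routine, namely verifying the stated critical points of $V$ and the elementary local description of the level set near a regular point.
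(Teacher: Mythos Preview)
Your proof is correct and shares the paper's core idea: use $F(\eta)=V(x^*(\eta),y^*(\eta))$ together with \eqref{15}--\eqref{17} to obtain the strict inequality $F(\eta)<c$ for $\eta\neq 0$. The paper, however, obtains strictness more directly by a case split rather than your rigidity-by-contradiction argument: if $\beta\neq 0$ then $y^*(0)\neq 0$, so by continuity $y^*\neq 0$ near $0$ and \eqref{15} gives $F'(\eta)<0$ for small $\eta>0$, which together with \eqref{16} yields $F(\eta)<F(0)$ for all $\eta\in(0,E]$; if $\beta=0$ then $0<|\alpha|<(1-p)^{1/(1-p)}$ forces $y^{*'}(0)=H(\alpha)\neq 0$ via \eqref{4}, so again $y^*\neq 0$ for small $\eta\neq 0$ and the same reasoning applies. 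Your unified argument---that $F$ constant on an interval pins the orbit to one of the three excluded equilibria---is a clean alternative that handles both cases at once and makes the role of the excluded set in $\bar{D}_{c^*(p)}'$ more transparent. You are also more careful than the paper about the final connectedness step needed to pass from $F(\eta)<c$ to the specific component $D_c$ of $\{V<c\}$; the paper simply asserts $(x^*(\eta),y^*(\eta))\in D_c$ once $V(x^*(\eta),y^*(\eta))<c$, relying implicitly on the level-curve picture.
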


\begin{proof}
Let the zero-value $(\alpha , \beta )\in \partial D_c\backslash \{ \pm ((1-p)^{\frac{1}{1-p}} ,0) \}$ with \[ 0<c=V(\alpha , \beta ) \leq c^*(p). \] We first consider the case when $\beta \not= 0$. It follows from \eqref{15}-\eqref{17} that, 
\begin{equation} \label{18} F(\eta ) < F(0) \ \ \ \forall \eta\in [-E,E]\backslash \{ 0 \} . \end{equation}
Therefore, via \eqref{18}, 
\[ V(x^*(\eta ), y^*(\eta )) <c \ \ \ \forall \eta \in [-E,E] \backslash \{ 0 \} ,\]
and so 
\[ (x^*(\eta ) , y^*(\eta ))\in D_c \ \ \ \forall \eta\in [-E,E] \backslash \{ 0 \} ,\]
as required. Now consider the case when $\beta = 0$. Then $0<|\alpha |< (1-p)^{1/(1-p)}$ and therefore, via \eqref{4} $y^{*'}(0) \not= 0$ after which a similar argument completes the proof.
\end{proof}
\noindent We now have:

\begin{thm} \label{thm4}
For each $(\alpha , \beta )\in \bar{D}_{c^*(p)}'$, then (S) with zero-value $(\alpha , \beta )$ has a solution $(x^*(\eta ), y^*(\eta ))$ on $[-E,E]$ (any $E>0$). Moreover, every such solution satisfies $(x^*(\eta ), y^*(\eta ))\in D_c$ for all $\eta \in [-E,E] \backslash \{ 0 \}$, where $c=V(\alpha , \beta )$.
\end{thm}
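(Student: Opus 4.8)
The plan is to separate the two assertions. The confinement statement --- that every solution lies in $D_c$ for $\eta\in[-E,E]\setminus\{0\}$ --- is exactly Lemma \ref{thm3} once a solution on $[-E,E]$ is known to exist, so the genuinely new content is to upgrade the \emph{local} existence of Theorem \ref{thm1} to existence on the whole of $[-E,E]$, for arbitrary $E>0$. Since Theorem \ref{thm1} yields a solution only on a short interval $[-\delta,\delta]$, the heart of the matter is a continuation argument that excludes escape in finite $\eta$.

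First I would take the local solution $(x^*,y^*)$ through $(\alpha,\beta)$ supplied by Theorem \ref{thm1} and let $(\eta_-,\eta_+)$ be its maximal interval of existence. The crucial point is that the estimate underlying Lemma \ref{thm3} is available on this entire maximal interval: equation \eqref{15} gives $F'(\eta)=-\tfrac12\eta\,(y^*(\eta))^2$ on $(\eta_-,\eta_+)$, so $F$ is non-increasing for $\eta\geq0$ and non-decreasing for $\eta\leq0$, whence
\[ V(x^*(\eta),y^*(\eta))=F(\eta)\leq F(0)=c\leq c^*(p)\qquad\forall\,\eta\in(\eta_-,\eta_+). \]
Arguing exactly as in Lemma \ref{thm3} --- using the nested structure of the level curves of $V$ inside $\cal{H}$ together with the connectedness of the trajectory --- this confines $(x^*(\eta),y^*(\eta))$ to $\bar{D}_c\subseteq\bar{D}_{c^*(p)}$, a bounded subset of $\field{R}^2$. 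Hence $(x^*,y^*)$ is a priori bounded throughout its maximal interval.

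With this bound in hand I would run a standard continuation argument to show $\eta_+=+\infty$ (and, symmetrically, $\eta_-=-\infty$). Suppose instead $\eta_+<\infty$. On the compact set $\bar{D}_{c^*(p)}\times[0,\eta_+]$ the continuous map ${\bf{Q}}$ is bounded, say by $K>0$, so $|(x^{*'}(\eta),y^{*'}(\eta))|\leq K$ for all $\eta\in[0,\eta_+)$; consequently $(x^*,y^*)$ is Lipschitz on $[0,\eta_+)$ and extends continuously to $\eta=\eta_+$ with limit lying in $\bar{D}_c$. Applying the Cauchy--Peano local existence theorem (as in Theorem \ref{thm1}) at the base point $\eta=\eta_+$ then produces a solution on a neighbourhood of $\eta_+$, contradicting maximality. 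Therefore $(\eta_-,\eta_+)=\field{R}$, and in particular a solution exists on $[-E,E]$ for every $E>0$; the confinement claim is then immediate from Lemma \ref{thm3}.

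I expect the continuation step, rather than the confinement, to be the main obstacle, the delicate point being the worry that the non-Lipschitz character of ${\bf{Q}}$ near the plane $x=0$ might obstruct the extension. It does not: the Cauchy--Peano existence invoked in Theorem \ref{thm1}, and the continuation argument above, both require only continuity of ${\bf{Q}}$ and not local Lipschitz continuity, while uniqueness --- which genuinely fails near $x=0$ --- is nowhere needed for this existence statement.
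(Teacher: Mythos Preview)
Your proposal is correct and follows essentially the same approach as the paper: both arguments rest on the a priori confinement in $\bar{D}_{c}$ furnished by the Lyapunov calculation \eqref{15}, and then promote the local Cauchy--Peano existence of Theorem~\ref{thm1} to existence on all of $[-E,E]$. The only cosmetic difference is in the continuation step: the paper bounds $|{\bf{Q}}|$ on a fixed compact box $R'$ to obtain a uniform step size $\delta=1/(1+M)$ and then applies Cauchy--Peano finitely many times, whereas you phrase the same idea via the maximal interval of existence and the standard blow-up alternative; the confinement claim is then read off from Lemma~\ref{thm3} in both cases.
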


\begin{proof}
For any $(\alpha , \beta )\in\bar{D}_{c^*(p)}'$, Lemma \ref{thm3} establishes that (S) with zero-value $(\alpha , \beta )$ is a priori bounded. The result then follows by a finite number of applications of the Cauchy-Peano Local Existence Theorem (see \cite[Chapter 1, Theorem 1.2]{CODLEV1}), with $\delta = 1/(1+M)$ and
\[ M= \max_{(x,y,\eta )\in R'} |{\bf{Q}}(x,y,\eta )| \]
whilst
\[ R'= \{ (x,y,\eta )\in\field{R}^3 : |x| \leq 2(1-p)^{1/(1-p)} ,\ |y |\leq 2\sqrt{2c^*(p)} ,\ |\eta |\leq 2E \} .\]
The final statement follows immediately from Lemma \ref{thm3}.
\end{proof}

\noindent We can now establish a global existence result for (S), namely

\begin{cor} \label{cor5}
For $(\alpha , \beta) \in \bar{D}_{c^*(p)}'$ then (S) with zero-value $(\alpha , \beta )$ has a solution $(x^*(\eta ), y^*(\eta ))$ on $\field{R}$. Moreover, every such solution satisfies $(x^*(\eta ), y^*(\eta ))\in D_c$ for all $\eta \in \field{R} \backslash \{ 0\}$, where $c=V(\alpha , \beta )$. 
\end{cor}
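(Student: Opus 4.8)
The plan is to upgrade the finite-interval existence of Theorem \ref{thm4} to existence on all of $\field{R}$ by a standard continuation argument, the essential ingredient being the uniform a priori bound already in hand: by Lemma \ref{thm3} (equivalently, the energy monotonicity \eqref{15}--\eqref{17}), any solution issued from $(\alpha,\beta)\in\bar{D}_{c^*(p)}'$ has its orbit confined to the compact set $\bar{D}_c$, with $c=V(\alpha,\beta)$, independently of how far $\eta$ is continued. Since $\bar{D}_c$ is closed and bounded and ${\bf{Q}}\in C(\field{R}^3)$, the field ${\bf{Q}}$ is bounded on $\bar{D}_c\times[-E,E]$ for every finite $E$. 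Note that Theorem \ref{thm4} alone does not suffice, as it furnishes a (possibly different) solution on each $[-E,E]$ rather than a single orbit defined on all of $\field{R}$.

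For the existence statement I would take a local solution supplied by Theorem \ref{thm1} and pass to its maximal interval of existence $(\eta_-,\eta_+)$. The energy identity \eqref{15} gives $F(\eta)\le F(0)=c$ throughout this interval, so $(x^*(\eta),y^*(\eta))\in\bar{D}_c$ for all $\eta\in(\eta_-,\eta_+)$. Suppose, for contradiction, that $\eta_+<\infty$. Then the orbit remains in the compact set $\bar{D}_c\times[0,\eta_+]\subset\field{R}^3$, on which ${\bf{Q}}$ is continuous and bounded; the continuation theorem for continuous vector fields (the companion extension result to \cite[Chapter 1, Theorem 1.2]{CODLEV1}) then permits the solution to be extended beyond $\eta_+$, contradicting maximality. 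Hence $\eta_+=+\infty$, and an identical argument on the left gives $\eta_-=-\infty$, so the solution exists on all of $\field{R}$. (A diagonal Arzel\`a--Ascoli argument applied to the solutions of Theorem \ref{thm4} on $[-n,n]$ would furnish the same conclusion.)

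The final containment assertion then follows at once: given any solution $(x^*,y^*)$ on $\field{R}$, its restriction to $[-E,E]$ is a solution of (S) on $[-E,E]$ with the same zero-value, so Theorem \ref{thm4} (equivalently Lemma \ref{thm3}) yields $(x^*(\eta),y^*(\eta))\in D_c$ for all $\eta\in[-E,E]\backslash\{0\}$; letting $E\to\infty$ gives the conclusion on $\field{R}\backslash\{0\}$.

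I expect the only real subtlety to lie in justifying that the maximal interval is genuinely all of $\field{R}$ despite ${\bf{Q}}$ being both non-Lipschitz and unbounded in the $\eta$-direction. The non-Lipschitz character is immaterial here, since global continuation requires only continuity together with an orbit bound, and not uniqueness; and the $\eta$-unboundedness of ${\bf{Q}}$ is harmless because the $(x,y)$-component of the orbit is trapped in the fixed compact set $\bar{D}_c$ for every $\eta$, so on each finite window the relevant region of $\field{R}^3$ remains compact. The one point that must be stated carefully is the standard dichotomy underpinning continuation---that a solution with finite maximal time must eventually leave every compact subset of the domain---which the uniform bound $(x^*,y^*)\in\bar{D}_c$ directly contradicts.
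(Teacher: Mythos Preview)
Your argument is correct and is essentially the continuation idea the paper has in mind: its entire proof reads ``Since Theorem \ref{thm4} holds for any $E>0$, the result follows immediately.'' Your observation that Theorem \ref{thm4} a priori furnishes a possibly different solution on each $[-E,E]$ is a fair criticism of that brevity, and the maximal-interval/compactness argument you supply (or the Arzel\`a--Ascoli diagonal alternative you mention) is exactly the standard way to make the step rigorous; the paper is simply taking it for granted.
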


\begin{proof}
Since Theorem \ref{thm4} holds for any $E>0$, the result follows immediately. 
\end{proof}

\begin{rmk} \label{rmk6}
Let $(x^*(\eta ) , y^*(\eta ))$ be any solution to (S) on $[-E,E]$ with zero-value $(0,0)$. It follows from \eqref{7}, \eqref{14} and \eqref{15} that
\begin{equation} \label{20} V(x^*(\eta ),y^*(\eta ))=F(\eta ) \leq F(0) = V(0,0) = 0 \ \ \ \forall \eta \in [-E,E]. \end{equation} 
Thus $(x^*(\eta ),y^*(\eta ))\in \cal{S}$ for all $\eta \in [-E,E]$, with $\cal{S}$ being a connected subset of 
\[ \{ (x,y) \in\field{R}^2 : V(x,y) \leq 0 \}\]
for which $(0,0)\in \cal{S}$. It follows that ${\cal{S}} = \{ (0,0) \}$ and so $(x^*(\eta ),y^*(\eta )) = (0,0) $ for all $\eta \in [-E,E]$. We conclude that the unique solution to (S) with zero-value $(0,0)$ is given by,
\[ (x^*(\eta ),y^*(\eta )) = (0,0) \ \ \ \forall \eta \in \field{R} .\]
\end{rmk}

We next introduce the function $H:\field{R}\to\field{R}$ such that
\begin{equation} \label{21} H(x) = \frac{1}{(1-p)} x - x|x|^{p-1}\ \ \ \forall x\in\field{R} , \end{equation}
and observe that 
\begin{equation} \label{22} H\in C(\field{R}) . \end{equation}
We have,

\begin{lem} \label{lem7}
Let $(\alpha , \beta ) \in \bar{D}_{c^*(p)}'$, and let $(x^*(\eta ), y^*(\eta ))$ for $\eta \in \field{R}$ be a global solution to (S) with zero-value $(\alpha , \beta )$. Then 
\[ y^*(\eta ) \to 0 \text{ as } |\eta |\to\infty . \]
\end{lem}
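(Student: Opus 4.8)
The plan is to treat the second component of the system as a linear first-order ODE in $y^*$, driven by a bounded forcing term, and to exploit the strong damping coefficient $\tfrac12\eta$ through an integrating factor. First I would record that, by Corollary \ref{cor5}, the global solution remains in $\bar D_c$ for all $\eta$, and since $\bar D_c\subseteq\bar D_{c^*(p)}$ is bounded, both $x^*$ and $y^*$ are bounded on $\field{R}$. As $H\in C(\field{R})$ by \eqref{22}, the composition $\eta\mapsto H(x^*(\eta))$ is then continuous and bounded, say $|H(x^*(\eta))|\le K$ for all $\eta\in\field{R}$.

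Next, using \eqref{4} and \eqref{21} I would write $y^{*\prime}(\eta)=H(x^*(\eta))-\tfrac12\eta\,y^*(\eta)$ and multiply through by the integrating factor $e^{\eta^2/4}$ to obtain $\frac{d}{d\eta}\bigl(e^{\eta^2/4}y^*(\eta)\bigr)=e^{\eta^2/4}H(x^*(\eta))$. Fixing some $\eta_0>\sqrt2$ and integrating from $\eta_0$ to $\eta>\eta_0$ yields the variation-of-parameters formula $y^*(\eta)=e^{-(\eta^2-\eta_0^2)/4}y^*(\eta_0)+e^{-\eta^2/4}\int_{\eta_0}^{\eta}e^{s^2/4}H(x^*(s))\,ds$. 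The first term plainly tends to $0$ as $\eta\to+\infty$, while the second is bounded in modulus by $K\,e^{-\eta^2/4}\int_{\eta_0}^{\eta}e^{s^2/4}\,ds$.

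The hard part will be the Laplace-type estimate $e^{-\eta^2/4}\int_{\eta_0}^{\eta}e^{s^2/4}\,ds\to0$ as $\eta\to+\infty$, which is the only genuinely analytic step. I would establish it by integrating by parts, writing $e^{s^2/4}=\tfrac{2}{s}\,\tfrac{d}{ds}e^{s^2/4}$ for $s\ge\eta_0>0$, to get $\int_{\eta_0}^{\eta}e^{s^2/4}\,ds=\tfrac{2}{\eta}e^{\eta^2/4}-\tfrac{2}{\eta_0}e^{\eta_0^2/4}+\int_{\eta_0}^{\eta}\tfrac{2}{s^2}e^{s^2/4}\,ds$. Since $\tfrac{2}{s^2}\le\tfrac{2}{\eta_0^2}$ on the range of integration, the remainder is at most $\tfrac{2}{\eta_0^2}\int_{\eta_0}^{\eta}e^{s^2/4}\,ds$; the choice $\eta_0>\sqrt2$ makes $1-\tfrac{2}{\eta_0^2}>0$, so this term may be absorbed to give $\int_{\eta_0}^{\eta}e^{s^2/4}\,ds\le\tfrac{2}{\eta(1-2/\eta_0^2)}\,e^{\eta^2/4}$ and hence $e^{-\eta^2/4}\int_{\eta_0}^{\eta}e^{s^2/4}\,ds=O(1/\eta)\to0$. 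This yields $y^*(\eta)\to0$ as $\eta\to+\infty$. The case $\eta\to-\infty$ follows by the identical computation, integrating the integrating-factor identity from $\eta$ up to $-\eta_0$ and using the evenness of $s\mapsto e^{s^2/4}$ to reduce to the estimate already obtained.

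An alternative route would use the Lyapunov function: $F$ is non-increasing on $[0,\infty)$ by \eqref{16} and bounded below by $0$ (as $V\ge0$ on $\bar D_c$), so $\int_0^\infty s\,(y^*(s))^2\,ds<\infty$ via \eqref{15}. Converting this integrability into pointwise decay by a Barbalat-type argument is, however, delicate, because $y^{*\prime}$ need not be bounded (the damping term $\tfrac12\eta y^*$ grows) and so uniform continuity of $y^*$ is not available. I would therefore prefer the direct integrating-factor argument above, which sidesteps that concern entirely.
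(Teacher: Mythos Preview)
Your proof is correct and follows essentially the same line as the paper's: write \eqref{4} as a linear first-order ODE in $y^*$, solve by the integrating factor $e^{\eta^2/4}$, bound $|H(x^*(\cdot))|$ uniformly via Corollary~\ref{cor5}, and then show $e^{-\eta^2/4}\int^{\eta} e^{s^2/4}\,ds = O(1/\eta)$. The only cosmetic differences are that the paper integrates from $0$ rather than from a fixed $\eta_0>\sqrt{2}$, and that the paper cites Watson's Lemma for the asymptotic $\int_0^{\eta} e^{s^2/4}\,ds\sim \tfrac{2}{\eta}e^{\eta^2/4}$ whereas you supply the estimate directly by the integration-by-parts absorption argument; your version is slightly more self-contained but otherwise identical in spirit.
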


\begin{proof}
We establish the result for $\eta \to \infty$; the result for $\eta\to -\infty$ follows similarly. Now, from \eqref{4},
\begin{equation} \label{23} y^{*'}(\eta )  =H(x^*(\eta )) - \frac{1}{2}\eta y^*(\eta ) \ \ \ \forall \eta \in [0, \infty ) .\end{equation}
It then follows from \eqref{23} that,
\begin{equation} \label{24} y^*(\eta ) = \beta e^{-\frac{1}{4}\eta^2} +  e^{-\frac{1}{4}\eta^2}\int_0^\eta H(x^*(s)) e^{\frac{1}{4}s^2}ds \ \ \ \forall \eta \in [ 0,\infty ) .\end{equation}
Thus,
\begin{equation} \label{25} |y^*(\eta )| \leq |\beta | e^{-\frac{1}{4}\eta^2} +  e^{-\frac{1}{4}\eta^2} \int_0^\eta | H(x^*(s)) | e^{\frac{1}{4}s^2}ds \ \ \ \forall \eta \in [0,\infty ) .\end{equation}
However, via Corollary \ref{cor5}, $(x^*(\eta ), y^*(\eta ))\in \bar{D}_{c^*(p)}$ for $\eta \in [0,\infty )$, and so, via \eqref{22}, there exists a constant $M_H \geq 0$ such that 
\begin{equation} \label{26} |H(x^*(s))| \leq M_H \ \ \ \forall s \in [0,\infty ).\end{equation}
It then follows from \eqref{25} and \eqref{26} that 
\begin{equation} \label{27} |y^*(\eta )| \leq |\beta | e^{-\frac{1}{4}\eta^2} +  M_H e^{-\frac{1}{4}\eta^2} \int_0^\eta e^{\frac{1}{4}s^2}ds \ \ \ \forall \eta \in [0,\infty ).\end{equation} 
Now a simple application of Watson's Lemma (see \cite[Proposition 2.1]{PDM1}), gives,
\begin{equation} \label{28} \int_0^\eta e^{\frac{1}{4}s^2}ds \sim \frac{2}{\eta} e^{\frac{1}{4} \eta^2} \text{ as } \eta \to \infty . \end{equation}
We then have, via \eqref{27} and \eqref{28}, that 
\begin{equation} \label{29} |y^*(\eta )| \leq |\beta | e^{-\frac{1}{4}\eta^2} +  \frac{4M_H}{\eta } \text{ as } \eta\to\infty .  \end{equation} 
It follows from \eqref{29} that 
\[ y^*(\eta )\to 0 \text{ as } \eta\to \infty , \]
as required
\end{proof}

\noindent We next have,

\begin{lem} \label{lem8}
Let $(x^*(\eta ) , y^* (\eta ))$ for $\eta \in \field{R}$ be a global solution to (S) with zero-value $(\alpha , \beta )\in \bar{D}_{c^*(p)}'$, and $F:\field{R}\to\field{R}$ as in \eqref{14}. Then $F(\eta )$ is non-increasing for $\eta \in (0,\infty )$ and non-decreasing for $\eta \in (-\infty , 0)$, with 
\[ F(\eta ) \to \begin{cases} F_\infty & \text{ as }\eta \to\infty \\F_{-\infty} & \text{ as } \eta \to -\infty \end{cases} \]
where $ F_\infty , F_{-\infty} \in [0, F(0))$.
\end{lem}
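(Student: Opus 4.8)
The plan is to extract everything from the differential identity \eqref{15} together with the interior-containment established in Corollary \ref{cor5}, since the solution is now assumed global on all of $\field{R}$.

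First, for the monotonicity I would simply observe that \eqref{15} gives $F'(\eta) = -\frac{1}{2}\eta (y^*(\eta))^2$ for every $\eta \in \field{R}$. The sign of $F'$ is therefore dictated entirely by the sign of $\eta$: one has $F'(\eta) \leq 0$ for $\eta > 0$ and $F'(\eta) \geq 0$ for $\eta < 0$. This upgrades the local statements \eqref{16}--\eqref{17} at once to the claimed monotonicity of $F$ on the full half-lines $(0,\infty)$ and $(-\infty,0)$, with no further work.

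Second, for the existence of the limits I would invoke boundedness. By Corollary \ref{cor5}, $(x^*(\eta),y^*(\eta)) \in \bar{D}_{c^*(p)}$ for all $\eta$, and on this closed region $V$ attains its minimum value $0$ at the origin and increases out to the value $c^*(p)$ on $\mathcal{H}$, so that $0 \leq V \leq c^*(p)$ throughout $\bar{D}_{c^*(p)}$. Hence $F(\eta) = V(x^*(\eta),y^*(\eta)) \in [0, c^*(p)]$ for all $\eta$. A bounded monotone function converges, so $F_\infty = \lim_{\eta\to\infty} F(\eta)$ and $F_{-\infty} = \lim_{\eta\to-\infty} F(\eta)$ both exist, and the lower bound $F \geq 0$ forces $F_\infty, F_{-\infty} \geq 0$.

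Third, and this is the only step needing care, I would establish the strict upper bound $F_\infty, F_{-\infty} < F(0)$; monotonicity by itself yields only the non-strict $F_\infty \leq F(0)$. To obtain strictness I would exploit that Corollary \ref{cor5} places the solution in the \emph{open} interior $D_c$, not merely $\bar D_c$, for every $\eta \neq 0$, where $c = V(\alpha,\beta) = F(0)$. Consequently $F(\eta) = V(x^*(\eta),y^*(\eta)) < c = F(0)$ for all $\eta \neq 0$. Evaluating at a single point, say $\eta = 1$, and using that $F$ is non-increasing on $(0,\infty)$, I obtain $F_\infty \leq F(1) < F(0)$; the symmetric argument at $\eta = -1$, using that $F$ is non-decreasing on $(-\infty,0)$, gives $F_{-\infty} \leq F(-1) < F(0)$. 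Combined with the lower bounds from the previous step, this yields $F_\infty, F_{-\infty} \in [0, F(0))$. The main (and only mild) obstacle is precisely this passage from the pointwise strict inequality to a strict inequality for the limit, which the monotone interior-containment makes immediate once evaluated at any fixed nonzero $\eta$.
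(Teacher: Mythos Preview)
Your proposal is correct and follows essentially the same route as the paper: both arguments combine the monotonicity from \eqref{15}--\eqref{17} with the interior-containment $(x^*(\eta),y^*(\eta))\in D_c$ for $\eta\neq 0$ from Corollary \ref{cor5} to obtain $0\leq F(\eta)<F(0)$, and then invoke bounded monotone convergence. Your explicit evaluation at $\eta=\pm 1$ to pass the strict inequality to the limit is a helpful elaboration of what the paper leaves implicit.
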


\begin{proof}
We observe from Corollary \ref{cor5} that 
\begin{equation} \label{30} (x^*(\eta ) , y^*(\eta ))\in D_c \ \ \ \forall \eta \in \field{R} \backslash \{ 0\}, \end{equation}
with $c=V(\alpha , \beta )=F(0)$, and so, 
\begin{equation} \label{31} 0 \leq F(\eta ) < F(0)\ \ \ \forall \eta \in \field{R} \backslash \{ 0\} . \end{equation}
In addition, it follows from \eqref{31}, \eqref{16} and \eqref{17}, since $F\in C^1(\field{R})$, that there exist \mbox{$F_\infty , F_{-\infty}\in\field{R}$,} such that
\[ F(\eta ) \to \begin{cases} F_\infty & \text{ as }\eta \to\infty \\F_{-\infty} & \text{ as } \eta \to -\infty \end{cases} \]
where $ F_\infty , F_{-\infty} \in [0, F(0))$, as required. 
\end{proof}

\noindent We now have,

\begin{thm} \label{thm9}
Let $(x^*(\eta ) , y^* (\eta ))$ for $\eta \in \field{R}$ be a global solution to (S) with zero-value $(\alpha , \beta )\in \bar{D}_{c^*(p)}'$. Then,
\[ (x^*(\eta ) , y^*(\eta )) \to (0,0) \text{ as } |\eta |\to\infty . \]
\end{thm}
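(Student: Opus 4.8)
The plan is to prove convergence as $\eta \to \infty$; the case $\eta \to -\infty$ then follows from the reflection symmetry $(x^*(\eta),y^*(\eta)) \mapsto (x^*(-\eta),-y^*(-\eta))$ of \eqref{3}-\eqref{4} (or directly, since Lemmas \ref{lem7} and \ref{lem8} are stated in both directions). The starting point is to combine Lemma \ref{lem7}, which gives $y^*(\eta)\to 0$, with Lemma \ref{lem8}, which gives $F(\eta)=V(x^*(\eta),y^*(\eta))\to F_\infty\in[0,F(0))$. Writing $V(x,y)=\tfrac12 y^2 + g(x)$ with $g(x):=V(x,0)=-\tfrac{1}{2(1-p)}x^2+\tfrac{1}{1+p}|x|^{1+p}$, the identity $g(x^*(\eta))=F(\eta)-\tfrac12(y^*(\eta))^2$ together with $y^*(\eta)\to 0$ gives $g(x^*(\eta))\to F_\infty$. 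The whole problem thus reduces to showing that $F_\infty=0$ and that this forces $x^*(\eta)\to 0$.

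A short computation shows that $g$ is even, strictly increasing on $[0,(1-p)^{1/(1-p)}]$ from $g(0)=0$ to $g((1-p)^{1/(1-p)})=c^*(p)$, and that $H=-g'$, with $H$ as in \eqref{21}. By Corollary \ref{cor5} the orbit lies in $\bar D_{c^*(p)}$, so $|x^*(\eta)|\le(1-p)^{1/(1-p)}$, and since $g$ is injective on $[0,(1-p)^{1/(1-p)}]$, the convergence $g(x^*(\eta))\to F_\infty$ forces $|x^*(\eta)|\to\xi$, where $\xi\in[0,(1-p)^{1/(1-p)})$ is the unique nonnegative root of $g(\xi)=F_\infty$. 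The next step is to rule out $\xi>0$. If $\xi>0$ then $|x^*(\eta)|$ is eventually bounded away from $0$, so by continuity $x^*$ has a fixed sign for all large $\eta$; say $x^*(\eta)\to\xi>0$ (the case $x^*(\eta)\to-\xi$ is identical using the oddness of $H$). Since $g'>0$ on $(0,(1-p)^{1/(1-p)})$, we then have $H(x^*(\eta))\to H(\xi)=-g'(\xi)=:-L$ with $L>0$.

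The key estimate comes from the integral representation \eqref{24}. Choosing $\eta_2$ beyond which $H(x^*(s))\le -L/2$, splitting the integral at $\eta_2$, and using the lower bound $\int_{\eta_2}^{\eta}e^{s^2/4}\,ds\ge\tfrac{2}{\eta}e^{\eta^2/4}-C$ (which follows from $\tfrac{d}{ds}(\tfrac2s e^{s^2/4})\le e^{s^2/4}$ and is the quantitative form of the asymptotics \eqref{28}), one finds that for all sufficiently large $\eta$
\[ y^*(\eta)\le -\frac{L}{2\eta}. \]
Integrating then gives
\[ x^*(\eta)\le x^*(\eta_0)-\tfrac{L}{2}\ln(\eta/\eta_0)\to -\infty, \]
contradicting the boundedness $|x^*(\eta)|\le(1-p)^{1/(1-p)}$ from Corollary \ref{cor5}. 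Hence $F_\infty=0$, so $\xi=0$ and $|x^*(\eta)|\to 0$; together with $y^*(\eta)\to 0$ this yields $(x^*(\eta),y^*(\eta))\to(0,0)$.

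I expect the main obstacle to be precisely the sharp one-sided control of $y^*$ in the last paragraph. Because the damping coefficient $\tfrac12\eta$ in \eqref{4} grows without bound, the facts $y^*(\eta)\to 0$ and $F_\infty>0$ are not by themselves contradictory: the orbit could a priori creep toward $(\pm\xi,0)$. The resolution is that a nonzero forcing $H(x^*(\eta))\to -L\neq 0$ is incompatible with this, since balancing $H$ against the growing damping makes $y^*$ decay only like $1/\eta$ with a \emph{definite} sign, and $1/\eta$ is not integrable, so $x^*$ cannot stay bounded. Turning this heuristic into the rigorous inequality $y^*(\eta)\le -L/(2\eta)$ via \eqref{24} and the Laplace-type bound underlying \eqref{28} is the crux of the proof.
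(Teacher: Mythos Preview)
Your proof is correct and follows essentially the same route as the paper's: combine Lemma~\ref{lem7} and Lemma~\ref{lem8} to see that $g(x^*(\eta))=V(x^*(\eta),0)\to F_\infty$, reduce to ruling out a nonzero limit $\pm\xi$ for $x^*$, and then use the integral representation \eqref{24} together with the Laplace-type asymptotics \eqref{28} to show that a nonzero $H(\xi)$ forces $y^*(\eta)$ to behave like a constant over $\eta$, whose non-integrability makes $x^*$ diverge logarithmically. The only cosmetic differences are that the paper phrases the key step as the asymptotic $y^*(\eta)\sim 2H(x_\infty)/\eta$ (equation \eqref{42}) rather than your one-sided inequality $y^*(\eta)\le -L/(2\eta)$, and reaches the contradiction against the convergence $x^*(\eta)\to x_\infty$ rather than against boundedness; your version is, if anything, slightly more explicit about why $x^*$ must eventually have a fixed sign.
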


\begin{proof}
We establish the result for $\eta \to \infty$. The result for $\eta \to -\infty$ follows similarly. We first recall from Corollary \ref{cor5} that,
\begin{equation} \label{32} (x^*(\eta ) , y^*(\eta ))\in D_{c^*(p)} \ \ \ \forall \eta \in \field{R} \backslash \{ 0 \} ,\end{equation}
and from Lemma \ref{lem7} that,
\begin{equation} \label{33} y^*(\eta )\to 0 \text{ as } \eta \to \infty . \end{equation}
In addition, we have from Lemma \ref{lem8} that,
\begin{equation} \label{34} V(x^*(\eta ),y^*(\eta ))\to F_\infty \text{ as } \eta \to\infty \end{equation}
for some $ F_\infty \in [0, c^*(p))$. It follows from \eqref{32}-\eqref{34} that 
\begin{equation} \label{36} x^*(\eta ) \to  x_\infty \text{ or } x^*(\eta ) \to -x_\infty \text{ as }\eta \to \infty \end{equation}
where $x_\infty$ is the single non-negative root of 
\[ V(x,0) = F_\infty \text{ with } x\in [ 0, (1-p)^{1/(1-p)} ). \]
Without loss of generality we will suppose that 
\begin{equation} \label{38} (x^*(\eta ) , y^*(\eta ))\to (x_\infty , 0)\text{ as } \eta \to\infty . \end{equation}
However, it follows from \eqref{4} that, 
\begin{equation} \label{39} y^*(\eta ) = \beta e^{-\frac{1}{4}\eta^2} + e^{-\frac{1}{4} \eta^2} \int_0^\eta H(x^*(s)) e^{\frac{1}{4}s^2} ds \ \ \ \eta \in [0,\infty )\end{equation}
with $H:\field{R}\to\field{R}$ given by \eqref{21}, and
\begin{equation} \label{40} H(x_\infty ) \leq 0 . \end{equation}
Using \eqref{36}, it is straightforward to establish that, when, 
\begin{equation} \label{41} H(x_\infty ) <0 , \end{equation}
then from \eqref{39},
\begin{equation} \label{42} y^*(\eta ) \sim \frac{2H(x_\infty )}{\eta } \text{ as } \eta \to \infty . \end{equation} 
In addition, from \eqref{3}, we have, 
\begin{equation} \label{43}  x^*(\eta ) = \alpha + \int_0^\eta y^*(s)ds \ \ \ \forall \eta \in [0,\infty ) , \end{equation} 
which gives, via \eqref{42}, that 
\[ x^*(\eta ) \sim 2 H(x_\infty ) \log{\eta},\ \ \  \text{ as } \eta\to\infty ,\]
which contradicts \eqref{36}. We conclude that \eqref{41} cannot hold, and so, via \eqref{40}, we must have
\begin{equation} \label{44} H(x_\infty ) = 0 , \end{equation}
which, since $x_\infty \in [0, (1-p)^{1/(1-p)})$, requires $x_\infty = 0$. It then follows from \eqref{38} that,
\[ (x^*(\eta ) , y^*(\eta )) \to (0,0) \text{ as } \eta \to \infty , \] 
as required.
\end{proof}

We conclude from Corollary \ref{cor5} and Theorem \ref{thm9} that the problem (S) has a two parameter family of nontrivial, distinct homoclinic connections on the equilibrium point $(0,0)$, parametrized by \mbox{$(\alpha , \beta)\in \bar{D}_{c^*(p)}'$} which we will denote by $w_{\alpha , \beta}:\field{R}\to\field{R}$ for each $(\alpha ,\beta )\in \bar{D}_{c^*(p)}'$. Here $w=w_{\alpha , \beta }(\eta )$, $\eta \in \field{R}$, has zero-values $w(0)=\alpha$, $w'(0)=\beta $. Moreover, 
\[ (w_{\alpha , \beta } (\eta ) , w_{\alpha , \beta}'(\eta ))\in D_{V(\alpha , \beta )} \ \ \ \forall \eta \in\field{R}\backslash \{ 0 \} . \]
Additionally, note that $w_{0,\beta }(\eta )$ is an odd function of $\eta$ whilst $w_{\alpha , 0}(\eta )$ is an even function of $\eta $. Furthermore, it also follows from the comments below \eqref{2b'} that $w_{\alpha , \beta }(\eta )$ must be two signed for $\eta\in\field{R}$.

\subsection{Decay Bounds and Estimates} \label{sec21}
In this section, we establish results concerning the rate of decay to zero of $w_{\alpha , \beta }(\eta )$ as $\eta \to\pm\infty$. Specifically, we establish algebraic bounds on the rate of decay of $w_{\alpha , \beta } (\eta )$ as $\eta \to \pm \infty $, and hence, determine that $w_{\alpha , \beta }\in L_q(\field{R})$ for each $q>(1-p)/2$. From these bounds we may infer that the corresponding solution to [CP], say {\mbox{$u_{\alpha , \beta}:\field{R}\times [0,\infty )\to\field{R}$,}} satisfies $u(\cdot , t)\in L_q(\field{R})$ for each $t\in [0,\infty )$ and $q>(1-p)/2$. To complement the algebraic bounds, we also provide a rational asymptotic approximation to the decay rate of $w_{\alpha , \beta}(\eta )$ as $\eta\to\pm \infty$, which, in fact suggests exponential decay as $\eta\to\pm\infty$.  

To begin, observe that $w=w_{\alpha , \beta }(\eta )$ for $\eta \in\field{R}$, via \eqref{z1}, satisfies
\[ (e^{\frac{1}{4}\eta^2}w')' = H(w)e^{\frac{1}{4}\eta^2} \ \ \ \forall \eta \in \field{R}.\]
It follows from two successive integrations, that
\begin{equation} \label{wd} w'(\eta )= \beta e^{-\frac{1}{4}\eta^2} + e^{-\frac{1}{4}\eta^2} \int_0^\eta H(w(s))e^{\frac{1}{4}s^2}ds \ \ \ \forall \eta \in\field{R} \end{equation}
whilst,
\begin{equation} \label{w} w(\eta )= \alpha + \int_0^\eta \beta e^{-\frac{1}{4}t^2} dt + \int_0^\eta e^{-\frac{1}{4}t^2} \int_0^t H(w(s))e^{\frac{1}{4}s^2}ds dt \ \ \ \forall \eta \in \field{R}. \end{equation}
We now have,

\begin{prop} \label{prop43}
Let $w:\field{R}\to\field{R}$ be a solution to (S) with zero-value $(\alpha, \beta )\in  \bar{D}_{c^*(p)}' $. Suppose that 
\[ |w(\eta )| \leq \frac{c_1}{(1+|\eta | )^\sigma} \ \ \ \forall \eta \in \field{R}.\]
with $\sigma \geq 0$ and $c_1 >0$ (independent of $\alpha$ and $\beta$). Then, there exists $c_2>0$, which depends on $c_1$, $\sigma$ and $p$, (independent of $\alpha$ and $\beta$) such that,
\[ |w'(\eta )| \leq \frac{c_2}{(1+|\eta | )^{\sigma p+1}} \ \ \ \forall\eta \in \field{R}.\]
\end{prop}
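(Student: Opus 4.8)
The plan is to work directly from the integral representation \eqref{wd} for $w'$, reducing the entire statement to a single uniform weighted-integral estimate. By the reflection symmetry of (S) it suffices to treat $\eta \ge 0$; the case $\eta \le 0$ is identical after replacing $\eta$ by $-\eta$. The first step is to convert the hypothesised decay of $w$ into a decay bound on the nonlinearity $H(w)$. From \eqref{21} one has $|H(x)| \le \frac{1}{1-p}|x| + |x|^p$, and since $0<p<1$ the term $|x|^p$ dominates for small $x$; inserting $|w(\eta)| \le c_1(1+|\eta|)^{-\sigma}$ and using $(1+|\eta|)^{-\sigma} \le (1+|\eta|)^{-\sigma p}$ yields a bound of the form $|H(w(\eta))| \le C_H (1+|\eta|)^{-\sigma p}$ for all $\eta\in\field{R}$, with $C_H$ depending only on $c_1$ and $p$.

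Substituting this into \eqref{wd} reduces the proposition to two tasks: controlling the boundary term $|\beta|\,e^{-\eta^2/4}$ and controlling the weighted integral
\[ I(\eta) = e^{-\frac14\eta^2}\int_0^\eta \frac{e^{\frac14 s^2}}{(1+s)^{\sigma p}}\,ds . \]
The boundary term is harmless: since $e^{-\eta^2/4}$ decays faster than any power, and $|\beta|$ is bounded on $\bar{D}_{c^*(p)}'$ by a constant depending only on $p$ (via \eqref{7} and \eqref{2b'}), it is dominated by $C(1+\eta)^{-(\sigma p+1)}$ with a constant independent of $\alpha,\beta$. The crux is therefore the uniform bound $I(\eta) \le C(1+\eta)^{-(\sigma p+1)}$ valid for \emph{all} $\eta\ge 0$.

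To prove this I would split the integral at $s=\eta/2$. On $[0,\eta/2]$ one has $e^{s^2/4}\le e^{\eta^2/16}$, so using $(1+s)^{-\sigma p}\le 1$ that portion of $I(\eta)$ is at most $\tfrac12\eta\,e^{-3\eta^2/16}$, which decays super-polynomially and is trivially $\le C(1+\eta)^{-(\sigma p+1)}$. On $[\eta/2,\eta]$ I would pull out the slowly varying factor, using $(1+s)^{-\sigma p}\le (1+\eta/2)^{-\sigma p}\le C(1+\eta)^{-\sigma p}$, which leaves the scalar quantity $e^{-\eta^2/4}\int_0^\eta e^{s^2/4}\,ds$ to be estimated by $C(1+\eta)^{-1}$; multiplying the two factors gives the required $(1+\eta)^{-(\sigma p+1)}$.

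The main obstacle is that the paper so far records only the \emph{asymptotic} relation \eqref{28} (Watson's Lemma), whereas here I need an inequality valid for all $\eta\ge 0$ with an explicit constant. I would upgrade \eqref{28} to the uniform bound $e^{-\eta^2/4}\int_0^\eta e^{s^2/4}\,ds \le C(1+\eta)^{-1}$ by the elementary observation that $\frac{d}{ds}\big(\tfrac{2}{s}e^{s^2/4}\big)=e^{s^2/4}\big(1-\tfrac{2}{s^2}\big)\ge \tfrac12 e^{s^2/4}$ for $s\ge 2$; integrating from $2$ to $\eta$ bounds $\int_2^\eta e^{s^2/4}\,ds$ by $\tfrac{4}{\eta}e^{\eta^2/4}$, while the integral over $[0,2]$ contributes only a constant that is absorbed once $\eta$ is large, and small $\eta$ is handled by boundedness and positivity of both sides on a compact set. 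Combining the two pieces of $I(\eta)$ and adding back the boundary term yields the claimed bound, with a constant $c_2$ depending only on $c_1$, $\sigma$ and $p$ and independent of $\alpha,\beta$.
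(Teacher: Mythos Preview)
Your argument is correct and follows the same overall route as the paper: start from the integral representation \eqref{wd}, convert the decay hypothesis on $w$ into a decay bound $|H(w(\eta))|\le C(1+|\eta|)^{-\sigma p}$, and then estimate the resulting Laplace-type integral. The differences are tactical. The paper bounds $|H(w)|$ slightly more sharply, using Corollary \ref{cor5} to get $|H(w)|\le |w|^p\le c_1^p(1+\eta)^{-\sigma p}$ directly, whereas you use the cruder triangle inequality $|H(x)|\le \tfrac{1}{1-p}|x|+|x|^p$; either suffices. For the integral, the paper simply records the asymptotic $e^{-\eta^2/4}\int_0^\eta (1+s)^{-\sigma p}e^{s^2/4}\,ds\sim 2\eta^{-(\sigma p+1)}$ and passes from this to a global bound by continuity on compacta, while you carry out an explicit Laplace-method split at $s=\eta/2$ together with the elementary inequality $\int_2^\eta e^{s^2/4}\,ds\le \tfrac{4}{\eta}e^{\eta^2/4}$. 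Your version is more self-contained and makes the uniformity in $\eta$ and the independence of the constant from $(\alpha,\beta)$ (via the bound on $|\beta|$ in $\bar D_{c^*(p)}'$) fully explicit; the paper's version is shorter but leaves those points implicit.
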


\begin{proof}
We give a proof for $\eta \geq 0$; the result for $\eta < 0$ follows similarly. Observe that 
\begin{equation} \label{Z6'} |H(w(\eta ))| = \left| \frac{1}{(1-p)}w(\eta ) - |w(\eta )|^{p-1}w(\eta )\right| \leq \frac{c_1^p}{(1+\eta )^{\sigma p}} \ \ \ \forall \eta \in [0,\infty ), \end{equation}
since, via Corollary \ref{cor5}, $|w(\eta )|< (1-p)^{\frac{1}{(1-p)}}$ for $\eta \in [0,\infty )$. Thus, via \eqref{wd} and \eqref{Z6'}, we have,
\begin{equation} \label{de3} |w'(\eta )|  \leq |\beta | e^{-\frac{1}{4}\eta^2} +  c_1^pe^{-\frac{1}{4}\eta^2} \int_0^\eta \frac{1}{(1+s)^{\sigma p}} e^{\frac{1}{4}s^2} ds \ \ \ \forall \eta \in [0,\infty ) .\end{equation}
Now, the second term on the right hand side of \eqref{de3} is a non-negative continuous function for $\eta\in [0,\infty )$, with asymptotic form, 
\[  c_1^p e^{-\frac{1}{4}\eta^2} \int_0^\eta \frac{1}{(1+s)^{\sigma p}} e^{\frac{1}{4}s^2} ds \sim \frac{2c_1^p}{\eta^{\sigma p +1}} \text{ as }\eta\to\infty .\]
It follows that,
\[  c_1^p e^{-\frac{1}{4}\eta^2} \int_0^\eta \frac{1}{(1+s)^{\sigma p}} e^{\frac{1}{4}s^2} ds \leq \frac{4c_1^p}{\eta^{\sigma p +1}} \text{ as }\eta\to\infty .\]
We conclude that there exists a positive constant $c_2$, depending upon $c_1$, $p$, and $\sigma$, such that 
\[  |w'(\eta )| \leq \frac{c_2}{(1+\eta )^{\sigma p +1}} \ \ \ \forall \eta \in [0,\infty ), \]
as required. 
\end{proof}

\noindent We next have,

\begin{prop} \label{prop43'}
Let $w:\field{R}\to\field{R}$ be a solution to (S) with zero-value $(\alpha, \beta )\in  \bar{D}_{c^*(p)}'$. Then,
\[ (w(\eta ),w'(\eta))\to (0,0)  \text{ as } \eta \to \pm\infty , \]
and moreover, 
\[ |w'(\eta )| \leq \frac{c_2}{(1+|\eta |) } \ \ \ \forall \eta \in \field{R}, \]
with $c_2>0$ dependent upon $p$ (independent of $\alpha$ and $\beta$). 
\end{prop}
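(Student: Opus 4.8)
The plan is to combine Theorem~\ref{thm9} with Proposition~\ref{prop43} in a bootstrapping argument. Theorem~\ref{thm9} already gives $(w(\eta),w'(\eta))\to(0,0)$ as $\eta\to\pm\infty$, so the first assertion requires no further work. The substance of the proposition is the explicit algebraic decay bound $|w'(\eta)|\leq c_2/(1+|\eta|)$. I would obtain this by iterating the implication supplied by Proposition~\ref{prop43}, which converts a decay bound on $w$ of order $\sigma$ into a decay bound on $w'$ of order $\sigma p+1$.

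First I would establish a crude starting bound on $w$ itself. Since $(\alpha,\beta)\in\bar D_{c^*(p)}'$, Corollary~\ref{cor5} gives $|w(\eta)|\leq(1-p)^{1/(1-p)}$ for all $\eta$, which is precisely a bound of the form $|w(\eta)|\leq c_1/(1+|\eta|)^{\sigma}$ with $\sigma=0$ and $c_1=(1-p)^{1/(1-p)}$. Feeding $\sigma=0$ into Proposition~\ref{prop43} yields $|w'(\eta)|\leq c_2/(1+|\eta|)^{0\cdot p+1}=c_2/(1+|\eta|)$, which is exactly the claimed bound. So in fact a single application of Proposition~\ref{prop43} with $\sigma=0$ delivers the result directly, with no iteration needed.

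The one point that needs care is transferring a decay rate on $w'$ back to a decay rate on $w$, via the integral representation $w(\eta)=\alpha+\int_0^\eta w'(s)\,ds$ together with the fact that $w(\eta)\to0$; this is the mechanism by which any genuine bootstrap would proceed (a bound $|w'|\lesssim(1+|\eta|)^{-\gamma}$ with $\gamma>1$ integrates to $|w|\lesssim(1+|\eta|)^{-(\gamma-1)}$, improving $\sigma$ and restarting the cycle toward the faster decay promised in the subsection preamble). For the present statement, however, the bound $\sigma=0$ already saturates what is needed, so I would simply invoke Proposition~\ref{prop43} at $\sigma=0$, note that the resulting constant $c_2$ depends only on $c_1$, $\sigma$ and $p$ and hence, since $c_1=(1-p)^{1/(1-p)}$ is itself determined by $p$, on $p$ alone, independent of $(\alpha,\beta)$.

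The main obstacle, such as it is, lies entirely inside Proposition~\ref{prop43} and has already been discharged there: the Watson's-Lemma-type estimate controlling $e^{-\eta^2/4}\int_0^\eta(1+s)^{-\sigma p}e^{s^2/4}\,ds$ uniformly for $\eta\in[0,\infty)$. Given that lemma, the proof here is essentially a one-line deduction, and I would write it as such: apply Proposition~\ref{prop43} with the trivial sup-norm bound, record the resulting constant's dependence on $p$ only, and cite Theorem~\ref{thm9} for the limiting statement.
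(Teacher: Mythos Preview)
Your proof is correct and matches the paper's approach exactly: cite Theorem~\ref{thm9} for the limiting statement, then invoke Corollary~\ref{cor5} to obtain the trivial bound $|w(\eta)|\leq(1-p)^{1/(1-p)}$ and apply Proposition~\ref{prop43} with $\sigma=0$, $c_1=(1-p)^{1/(1-p)}$, noting the resulting $c_2$ depends only on $p$. The additional commentary about bootstrapping is not needed here (it anticipates Theorem~\ref{thm44}), but the core argument is identical to the paper's.
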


\begin{proof}
The first conclusion follows directly from Theorem \ref{thm9}. Additionally, it follows from Corollary \ref{cor5} that 
\[ (w(\eta ),w'(\eta ))\in \mathcal{H} \ \ \  \forall \eta \in \field{R},\]
and hence, it follows from Proposition \ref{prop43} (with $\sigma = 0$, $c_1=(1-p)^{1/(1-p)}$) that 
\[ |w'(\eta )| \leq \frac{c_2}{(1+|\eta | ) } \ \ \ \forall \eta \in \field{R} , \]
as required. 
\end{proof}

\noindent We now demonstrate that every solution $w:\field{R}\to\field{R}$ to (S) with zero-value {\mbox{$(\alpha, \beta )\in  \bar{D}_{c^*(p)}'$}} decays to zero as $\eta \to \pm\infty$, with decay rate which is at least algebraic in $\eta$ as $\eta \to \pm\infty$. In particular, we demonstrate that $w:\field{R}\to\field{R}$ is contained in $L^{q} (\field{R})$ for any $q>(1-p)/2$. The proof is based on the decay bounds obtained in \cite{AHFW1}. 

\begin{thm} \label{thm44}
Let $w:\field{R}\to\field{R}$ be a solution to (S) with zero-value \mbox{$(\alpha, \beta )\in  \bar{D}_{c^*(p)}'$}. Then, for any $\epsilon >0$, there exists $c_{1\epsilon},c_{2\epsilon}>0$ (dependent generally on $\alpha$, $\beta$, $p$ and $\epsilon$) such that 
\[ |w(\eta )| < \frac{c_{1\epsilon}}{(1+|\eta | )^{\frac{2}{(1-p)} - \epsilon}} \ \ \ \forall \eta \in \field{R} , \]
\[ |w'(\eta )| < \frac{c_{2\epsilon}}{(1+|\eta | )^{\frac{(1+p)}{(1-p)} - \epsilon}} \ \ \ \forall \eta \in \field{R} . \]
\end{thm}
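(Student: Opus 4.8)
The plan is to obtain the two power-law bounds by an iterative bootstrap on the decay exponent of $w$, using Proposition \ref{prop43} as the engine and the integral representations \eqref{wd}, \eqref{w} to feed each improvement back. The two target exponents are linked exactly by Proposition \ref{prop43}: writing $\sigma = 2/(1-p)-\epsilon$ for the decay rate of $w$, that proposition returns the rate $\sigma p + 1 = (1+p)/(1-p) - p\epsilon \ge (1+p)/(1-p)-\epsilon$ for $w'$. Hence, once the bound on $w$ is secured, the bound on $w'$ is immediate from Proposition \ref{prop43}, and the whole problem reduces to proving $|w(\eta)| \le c_{1\epsilon}(1+|\eta|)^{-(2/(1-p)-\epsilon)}$ for all $\eta\in\field{R}$.

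For the $w$ bound I would run the recursion $\sigma_{n+1} = p\,\sigma_n + 2$ with $\sigma_0 = 0$, the base case being the boundedness $|w|<(1-p)^{1/(1-p)}$ from Corollary \ref{cor5} together with the estimate $|w'|\le c_2/(1+|\eta|)$ of Proposition \ref{prop43'}. Since $p\in(0,1)$, this recursion converges monotonically to its fixed point $\sigma_* = 2/(1-p)$, with $\sigma_n = \tfrac{2}{1-p}(1-p^n)$; given $\epsilon>0$ one picks $n$ so large that $\sigma_n > 2/(1-p)-\epsilon$, which is exactly why the endpoint $2/(1-p)$ must be replaced by $2/(1-p)-\epsilon$ and why the constants depend on $\epsilon$. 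Each step is realised as a two-stage improvement of a bound $|w|\le c(1+|\eta|)^{-\sigma_n}$: first Proposition \ref{prop43} upgrades it to $|w'|\le c'(1+|\eta|)^{-(\sigma_n p+1)}$ (this contributes the factor $p$, via $|H(w)|\le|w|^p$, and one unit from the $1/\eta$ Watson factor in \eqref{wd}); then, re-expressing $w(\eta) = -\int_\eta^\infty w'(t)\,dt = -\int_\eta^\infty e^{-t^2/4}G(t)\,dt$ with $G(t)=\beta+\int_0^t H(w(s))e^{s^2/4}\,ds$, a second Watson-type estimate on the Gaussian-weighted tail should contribute a further unit, yielding $|w|\le c''(1+|\eta|)^{-(\sigma_n p+2)} = c''(1+|\eta|)^{-\sigma_{n+1}}$. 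Bounds derived for large $\eta$ are promoted to all $\eta\in\field{R}$ by adjusting constants (using boundedness on compacta and treating $\eta<0$ symmetrically).

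The hard part, and the only place where genuine work beyond Proposition \ref{prop43} is needed, is that second stage: showing that passing from $w'$ back to $w$ really gains the extra factor $1/\eta$, i.e. the ``$+1$'' that turns $\sigma_n p$ into $\sigma_n p + 2$ over a full cycle. The crude estimate $|w(\eta)|\le\int_\eta^\infty |w'(t)|\,dt$ only produces the rate $\sigma_n p$ and pins the recursion at its trivial fixed point $0$; equivalently, controlling $G$ by $|G(t)|\le |G(\eta)| + \int_\eta^t|H(w(s))|e^{s^2/4}\,ds$ reintroduces a tail term of exact order $\int_\eta^\infty |w(s)|^p/s\,ds \sim \eta^{-\sigma_n p}$ which stalls the argument. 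To beat this one must exploit the oscillatory cancellation in $G$ — equivalently in $w'$, equivalently in $H(w)$, which is sign-changing as $(w,w')$ spirals into $(0,0)$ (recall every non-constant solution is two-signed). This cancellation should make $\int_\eta^\infty e^{-t^2/4}G(t)\,dt$ of order $e^{-\eta^2/4}G(\eta)/\eta = w'(\eta)/\eta$, rather than of order $\int_\eta^\infty |w|^p/s\,ds$. This is precisely the flavour of weighted decay estimate established in \cite{AHFW1}, and adapting it to the present non-Lipschitz, sign-changing setting is the main technical obstacle; the mild loss incurred there is what forces $\epsilon>0$. Once the $w$ bound holds for some $\sigma_n > 2/(1-p)-\epsilon$, a final application of Proposition \ref{prop43} delivers the companion bound on $w'$, completing the proof.
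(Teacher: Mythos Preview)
Your bootstrap skeleton and the target recursion $\sigma_{n+1}=p\sigma_n+2\to 2/(1-p)$ are sound, and you have correctly isolated the obstruction: recovering $|w|\lesssim\eta^{-(\sigma p+2)}$ from $|w'|\lesssim\eta^{-(\sigma p+1)}$ is the whole difficulty, since $|w(\eta)|\le\int_\eta^\infty|w'|$ only returns $\eta^{-\sigma p}$ and the iteration stalls at $\sigma=0$. Where your proposal falls short is in the resolution you sketch. Appealing to ``oscillatory cancellation'' in $G$ (equivalently in $H(w)$) is not a workable plan here: you have no quantitative control on the phase or spacing of the sign changes of $w$, only the soft information that $w$ is two-signed, so there is no mechanism to make $\int_\eta^\infty H(w(t))/t\,dt$ small. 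That integral is exactly the tail term your integration-by-parts produces, and without a concrete cancellation estimate it remains of order $\eta^{-\sigma p}$; the argument as written does not close.

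The paper avoids oscillation altogether by bootstrapping not on $w$ but on the Lyapunov energy $F(\eta)=V(w(\eta),w'(\eta))$, which is non-negative and (crucially) monotone non-increasing on $[0,\infty)$. The algebraic identity obtained by multiplying \eqref{z1} by $\eta^{-1}w(\eta)$ expresses $\eta^{-1}\bigl(|w|^{1+p}-\tfrac{w^2}{1-p}\bigr)$ as a total derivative plus terms controlled by $(w')^2/\eta$ and $ww'/\eta^2$; integrating from $\eta$ to $\infty$ and using $F\ge c(p)\,|w|^{1+p}$ gives a closed inequality
\[
\int_\eta^\infty \frac{F(t)}{t}\,dt \;\le\; \frac{F(\eta)}{C(p)} + \frac{c}{\eta^{\,2\sigma p/(1+p)+2}},
\]
whenever $F(\eta)\le k\eta^{-\sigma}$. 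Monotonicity of $F$ then converts the left side back into a pointwise bound via $\int_\eta^{2\eta}F/t\ge\tfrac12 F(2\eta)$, and a Gr\"onwall-type integration yields the recursion $\sigma_{n+1}=\min\{\tfrac{2p}{1+p}\sigma_n+2,\,C(p)\}$ on $F$, with limit $2(1+p)/(1-p)$; the bound on $|w|$ follows from $|w|^{1+p}\le F/c(p)$. This is the Haraux--Weissler mechanism you cite, but its content is the energy identity and the monotonicity of $F$, not sign cancellation in $w$. Your proposal needs that ingredient (or an equivalent monotone quantity) to close the loop.
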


\begin{proof}
We give a proof for $\eta \geq 0$; the argument for $\eta < 0$ follows similarly. Observe on multiplying \eqref{z1} by $\eta^{-1}w(\eta )$, we have, 
\begin{equation} \label{51'} \frac{1}{\eta} \left[ |w(\eta )|^{1+p}- \frac{(w(\eta ))^2}{(1-p)}\right] = - \left[ \frac{(w(\eta ))^2}{4} + \frac{w(\eta )w'(\eta )}{\eta}\right]' + \frac{(w'(\eta ))^2}{\eta} - \frac{w(\eta )w'(\eta )}{\eta^2} \end{equation}
for $\eta \in (0,\infty )$. Additionally, via Proposition \ref{prop43'}, it follows that there exists $\eta_* \in (0,\infty )$ such that,  
\begin{equation} \label{120'} |w(\eta )| \leq \left(\frac{2p(1-p)}{(1+p)}\right)^{\frac{1}{(1-p)}} \ \ \ \forall\eta \in [\eta_*,\infty ) ,\end{equation}
and for $F:[0,\infty )\to\field{R}$ given by 
\[ F(\eta ) = V(w(\eta ), w'(\eta )) \ \ \ \forall \eta \in [0,\infty ),\]
that
\begin{equation} \label{120''} 0 \leq F(\eta ) \leq \left( \frac{4(c(p))^{\frac{2}{(1+p)}}}{C(p)} \right)^{(1+p)/(1-p)} \ \ \  \eta \in [\eta_*,\infty ) ,\end{equation}
where 
\begin{equation} \label{120'''} c(p) = \frac{1}{(1+p)} -\frac{1}{2},\text{ and }  C(p)= \frac{2(1+p)}{(1-p)} + 1 .\end{equation}
Thus, it follows from \eqref{51'} that 
\begin{align}
\nonumber \frac{F(\eta )}{\eta } &  = \frac{(w'(\eta ))^2}{2\eta } +  \frac{1}{\eta } \left[ -\frac{(w(\eta ))^2}{2(1-p) } + \frac{|w(\eta )|^{1+p}}{(1+p)}\right] \\
\nonumber & \leq \frac{(w'(\eta ))^2}{2\eta } +  \frac{1}{\eta } \left[ -\frac{(w(\eta ))^2}{(1-p) } + |w(\eta )|^{1+p} \right] \\
\label{Z8} & = \frac{3(w'(\eta ))^2}{2\eta} - \left[ \frac{(w(\eta ))^2}{4} + \frac{w(\eta )w'(\eta )}{\eta } \right]' - \frac{w(\eta )w'(\eta )}{\eta^2} , 
\end{align}
for $\eta \in [\eta_*,\infty )$. Since $F(\eta ) \geq 0$ for all $\eta \in [\eta^* , \infty )$, together with the decay estimates in Proposition \ref{prop43'}, it follows that we may integrate inequality \eqref{Z8} from $\eta\ (\geq \eta^* )$ to $l$, and then allow $l\to\infty$, to obtain,
\begin{equation} \label{Z9} \int_\eta^\infty \frac{F(t)}{t} dt \leq \frac{(w(\eta ))^2}{4} + \frac{2}{\eta} \sup_{t\geq \eta} |w(t)w'(t)| + \frac{3}{2} \int_\eta^\infty \frac{(w'(t))^2}{t} dt \end{equation}
for $ \eta \in [\eta_*,\infty )$. We also note, that since, via Corollary \ref{cor5}, $|w(\eta )|< (1-p)^{1/(1-p)}$, we have,
\begin{equation} \label{Z10} F(\eta ) \geq |w(\eta )|^{1+p}c(p) \geq 0, \end{equation}
for $\eta \in [\eta_*,\infty )$. It therefore follows from \eqref{Z9} and \eqref{Z10} that 
\begin{equation} \label{Z12}  0 \leq \int_\eta^\infty \frac{F(t )}{t }dt \leq \frac{1}{4}\left(\frac{F(\eta )}{c(p)} \right)^{\frac{2}{(1+p)}} + \frac{2}{\eta} \sup_{t\geq \eta}| w(t)w'(t)| + \frac{3}{2} \int_\eta^\infty \frac{(w'(t))^2}{t} dt \end{equation}
for $\eta \in [\eta_*, \infty )$. We observe that the right hand side of \eqref{Z12} is uniformly bounded for $\eta \in [\eta_*,\infty )$ via Proposition \ref{prop43'}.

Now suppose that there exists $k>0$ such that 
\begin{equation} \label{Z13} F(\eta ) \leq \frac{k}{\eta^\sigma}\ \ \ \forall \eta \in [\eta_*,\infty) \end{equation}
for some $\sigma \geq 0$ (note that \eqref{Z13} holds when $\sigma =0$ via Proposition \ref{prop43'}). Then, via \eqref{Z10}, it follows that there exists $c_1>0$ such that
\begin{equation} \label{126'} |w(\eta )| \leq \frac{c_1}{\eta^{\frac{\sigma}{(1+p)}}} \ \ \ \forall \eta \in [\eta_*,\infty ) \end{equation}
and so, via Proposition \ref{prop43}, there exists $c_2>0$ such that
\begin{equation} \label{126''} |w'(\eta )| \leq \frac{c_2}{\eta^{\frac{\sigma p}{(1+p)} + 1}}  \ \ \ \forall \eta \in [\eta_*,\infty ) .\end{equation}
Thus, it follows from \eqref{Z12}-\eqref{126''} and \eqref{120''}, that there exists $c_3,c_4,c_5>0$ such that
\begin{align}
\nonumber \int_\eta^\infty \frac{F(t )}{t }dt & \leq \frac{1}{4}\left(\frac{F(\eta )}{c(p)} \right)^{\frac{2}{(1+p)}} + \frac{c_3}{\eta^{\sigma +2}} + \frac{c_4}{\eta^{\frac{2\sigma p}{(1+p)} +2}} \\
\label{Z14} & \leq \frac{F(\eta )}{C(p)} + \frac{c_5}{\eta^{\frac{2\sigma p}{(1+p)} +2}}
\end{align}
for $\eta \in [\eta_*,\infty )$. Upon setting $G:[\eta_*,\infty )\to\field{R}$ to be
\[ G(\eta ) =  \int_\eta^\infty \frac{F(t )}{t }dt \ \ \ \forall \eta \in [\eta_*,\infty ), \]
it follows from \eqref{Z14} that $G$ satisfies,
\begin{equation} \label{Z15} \left( t^{C(p)} G(t)\right)'\leq \frac{c_6}{ t^{\frac{2\sigma p}{(1+p)} +3 -C(p)}} \ \ \ \forall t \in [\eta_*,\infty ), \end{equation}
with $c_6>0$ constant. An integration of \eqref{Z15} gives 
\begin{equation} \label{Z16} G(\eta ) \leq \frac{c_7}{\eta^{\frac{2\sigma p}{(1+p)} +2}} + \frac{c_8}{\eta^{C(p)}} \ \ \ \forall \eta \in [\eta_*,\infty ), \end{equation}
with $c_7,c_8>0$ constants. Also, recalling, via Lemma \ref{lem8}, that $F(\eta )$ is non-increasing on $[\eta^* , \infty )$, we have,  
\begin{equation} \label{Z17} G(\eta ) \geq \int_\eta^{2\eta} \frac{F(t)}{t}dt \geq \frac{1}{2} F(2\eta ), \ \ \ \forall \eta \in [\eta_*,\infty ) . \end{equation}
Thus, it follows from \eqref{Z16} and \eqref{Z17} that there exist constants $c_9,c_{10}>0$ such that 
\begin{equation} \label{Z18} F(\eta ) \leq \frac{c_9}{\eta^{\frac{2\sigma p}{(1+p)} +2}} + \frac{c_{10}}{\eta^{C(p)}} \ \ \ \forall \eta \in [\eta_*,\infty ). \end{equation}
Since \eqref{Z13} holds for $\sigma =0$, it follows that there exists sequences $\{\sigma_n\}_{n\in\field{N}}$ and  $\{k_n\}_{n\in\field{N}}$ given by 
\begin{equation} \label{Z19} \sigma_1=0,\ \ \ \sigma_{n+1}= \min\left\{ \frac{2\sigma_{n} p}{(1+p)} +2,\ C(p) \right\} \end{equation}
such that  
\begin{equation} \label{Z20} F(\eta ) \leq \frac{k_n}{\eta^{\sigma_n}}\ \ \ \forall \eta \in [\eta_*,\infty ). \end{equation}
We obtain from \eqref{Z19} and \eqref{120'''} that,
\[ \sigma_n= \frac{2(1+p)}{(1-p)} - \frac{4p}{(1-p)}\left(\frac{2p}{(1+p)}\right)^{n-2} \ \ \ \forall n\in\field{N} \]
and hence $\sigma_n$ is increasing with 
\begin{equation} \label{Z21} \sigma_n\to \frac{2(1+p)}{(1-p)} \ \ \ \text{ as } n\to\infty . \end{equation}
Therefore it follows, via \eqref{Z10} and \eqref{Z19}-\eqref{Z21}, that for each $\epsilon >0$ there exists $c_{1\epsilon} >0$ such that 
\begin{equation} \label{135} |w (\eta )| \leq \frac{c_{1\epsilon}}{(1+\eta)^{\frac{2}{(1-p)} -\epsilon}} \ \ \ \forall \eta \in [0,\infty ) ,\end{equation}
recalling that $w(\eta )$ is bounded on $[0,\eta^*]$. The bound on $|w'(\eta )|$ follows immediately from \eqref{135} and Proposition \ref{prop43}.
\end{proof}

The algebraic bounds in Theorem \ref{thm44} are the tightest decay rates we have been able to establish rigorously. However, the following asymptotic argument indicates that, in fact, $w=w_{\alpha , \beta }(\eta )$ decays exponentially in $\eta$ as $|\eta |\to\infty$, accompanied by rapid oscillatory behaviour. To this end, we now consider the asymptotic structure of $w=w_{\alpha , \beta } (\eta )$ as $\eta \to \infty$, with the same structure following as $\eta \to - \infty$. Now, for $\eta >> 1$, then $w=w_{\alpha , \beta }(\eta )$ satisfies, 
\begin{equation} \label{DI1} w'' + \frac{1}{2} \eta w' + w|w|^{p-1} - \frac{1}{(1-p)} w= 0 \ \ \ \eta >>1 \end{equation}
\begin{equation} \label{DI2} w(\eta ),\ w'(\eta ) \to 0 \ \ \ \text{ as }\eta \to\infty , \end{equation}
via \eqref{z1} and Proposition \ref{prop43'}. On using \eqref{DI2}, the dominant form of \eqref{DI1} when $\eta >>1$ is 
\begin{equation} \label{d3} w'' + w|w|^{p-1} = 0 . \end{equation}
Every solution to \eqref{d3} is periodic and may be written (up to translation in $\eta$) as,
\begin{equation} \label{d4} w(\eta ,a ) =a W\left( a^{-\frac{1}{2}(1-p)}\eta \right),\ \ \ \forall \eta \in \field{R}, \end{equation}
where $a \in \field{R}^+$ is a parameter and $W:\field{R}\to\field{R}$ is that unique periodic function which satisfies the problem,
\begin{equation} \label{d5'} W'' + W|W|^{p-1} = 0,\ \ \ \zeta \in\field{R} \end{equation}
\begin{equation} \label{d5} W(0) = 1, \ \ \ W'(0)=0 . \end{equation}
The period of $W(\zeta )$ is given by 
\begin{equation} \label{DI4} T(p) = 2^{3/2} (1+p)^{1/2} \int_0^1 \frac{d\lambda}{(1-\lambda^{(1+p)})^{1/2}} \end{equation}
whilst,
\begin{equation} \label{DI3} W(\zeta ) = -W\left( \frac{1}{2} T(p) - \zeta \right) = W(-\zeta ) \ \ \ \forall \zeta \in\field{R} . \end{equation}
Via an integration, the solution to \eqref{d5'}-\eqref{d5} satisfies 
\[ \frac{(W'(\eta ) )^2}{2} + \frac{|W(\eta )|^{1+p}}{(1+p)} = \frac{1}{(1+p)} \ \ \ \forall \eta \in\field{R} , \]
which represents a periodic orbit in the $(W,W')$ phase plane, as illustrated in Figure \ref{figd2}. It follows from \eqref{d4} that $w(\eta , a)$ has amplitude $a>0$ and period 
\begin{equation} \label{DI5} T_a(p) = a^{\frac{1}{2}(1-p)} T(p) . \end{equation}

\begin{figure}
  \centering
    \includegraphics[scale=0.3]{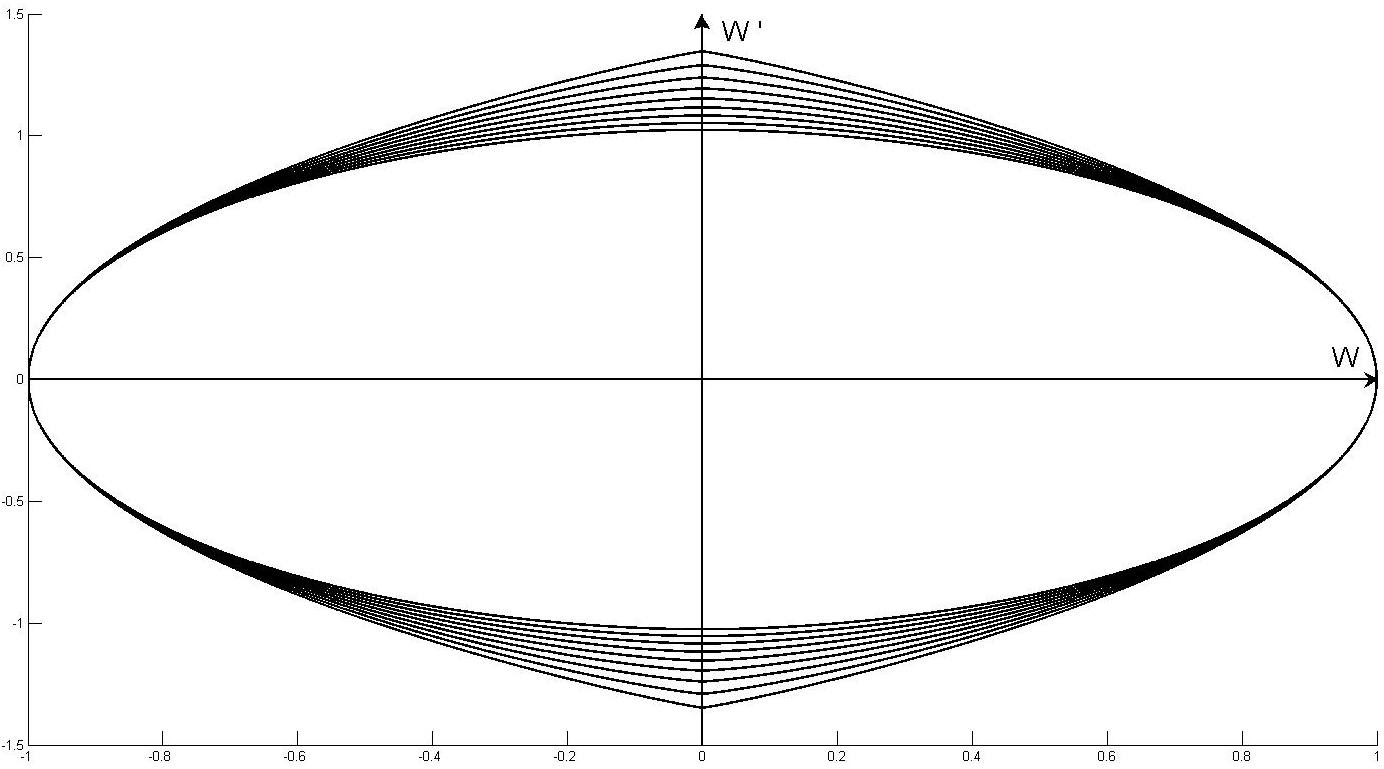}  \label{figd2}
		\caption{Phase paths for solutions to \eqref{d5'}-\eqref{d5} for $p_k=(0.1)k$ for $k=1...9$. Here the phase path for $p_k$ encloses the phase path for $p_{k+1}$ for $k=1...8$.}
\end{figure}
\noindent For any fixed $a\in\field{R}^+$, \eqref{d4} cannot represent the asymptotic structure to \eqref{DI1} and \eqref{DI2} since $W$ is periodic. The remaining terms in \eqref{DI1} must induce decay as $\eta \to \infty$. However, we observe from \eqref{DI5} that the oscillations in $w(\eta ,a)$ becomes increasingly rapid as the amplitude $a\to 0^+$. This suggests that we seek the asymptotic structure of \eqref{DI1}-\eqref{DI2} as $\eta\to\infty$ in the form,
\begin{equation} \label{d7} w(\eta ) \sim a(\eta )W(a (\eta )^{-\frac{1}{2}(1-p)}\eta ) \text{ as } \eta \to\infty , \end{equation}
with $a (\eta )>0$ and,
\begin{equation} \label{d8} a(\eta ),a'(\eta ) \to 0 \text{ as } \eta \to \infty . \end{equation}
Now, the rate of change of amplitude of oscillation in \eqref{d7}, $a'(\eta)$, approaches zero as $\eta \to \infty$, whilst the frequency of oscillation becomes unbounded as $\eta \to\infty$. We can thus use an averaging approach to determine an evolution equation for the amplitude $a(\eta )$ as $\eta \to \infty$. We substitute \eqref{d7} into \eqref{z1} and make use of \eqref{d5'}. We then integrate the resulting ordinary differential equation over {\emph{one}} period of $W(\cdot )$, over which, we may hold $a$ fixed. We obtain the leading order amplitude equation as,
\begin{equation} \label{d9} a'' + \frac{1}{2}\eta a' - \frac{1}{(1-p)}a = 0 ,\ \ \  \eta >>1 , \end{equation}
\begin{equation} \label{d10} a(\eta ),a'(\eta )\to 0 \text{ as } \eta\to \infty . \end{equation}
The linear ordinary differential equation \eqref{d9} has two basis functions $a_+:\field{R}\to\field{R}$ and $a_-:\field{R}\to\field{R}$ which have 
\[ a_+ (\eta ) \sim \eta^{-\left( 1+\frac{2}{(1-p)}\right)} e^{-\frac{1}{4}\eta^2} , \ \ \ a_- (\eta ) \sim \eta^{\frac{2}{(1-p)}} \ \ \ \text{ as }\eta \to\infty .\]
It follows that 
\begin{equation} \label{d11} a(\eta )\sim A_\infty \eta^{-\left( 1+\frac{2}{(1-p)}\right)}e^{-\frac{1}{4}\eta^2}\ \ \ \text{ as } \eta\to\infty ,\end{equation}
with $A_\infty$ being a positive globally determined constant dependent, in general, on $\alpha$, $\beta$ and $p$. Thus, from \eqref{d7}, we have 
\begin{equation} \label{es1} w_{\alpha , \beta}(\eta )\sim a (\eta ) W(a (\eta )^{-\frac{1}{2}(1-p)} \eta ) \text{ as }\eta \to\infty , \end{equation}
with, $\alpha (\eta )$ having the asymptotic form \eqref{d11} as $\eta \to \infty$. The same argument leads to the same (up to the constant $A_\infty$) asymptotic structure as $\eta \to -\infty$. As a consequence of \eqref{d11} and \eqref{es1}, we anticipate that $w_{\alpha , \beta } (\eta )$ decays to zero at a Gaussian rate as $|\eta |\to \infty$, whilst oscillating about zero with a local frequency which increases at a Gaussian rate as $|\eta |\to\infty$. This indicates that, in fact, $w_{\alpha , \beta }\in L^q(\field{R})$ for any $q>0$.

\subsection{Localized Solutions to [CP]} \label{sec32}
Following Corollary \ref{cor5} and Theorem \ref{thm9}, for each $(\alpha ,\beta )\in  \bar{D}_{c^*(p)}'$, we have constructed a non-trivial, localized, global solution $u_{\alpha , \beta }:\field{R}\times [0,\infty ) \to\field{R}$ to [CP], namely,
\begin{equation} \label{xx1} u_{\alpha , \beta }(x,t) = \begin{cases} t^{\frac{1}{(1-p)}} w_{\alpha , \beta } \left(\frac{x}{t^{1/2}}\right) &, (x,t)\in\field{R}\times (0,\infty ) \\ 0 &, (x,t)\in \field{R}\times \{ 0 \} . \end{cases} \end{equation}
With this two parameter family of solutions to [CP], each solution is distinct, and is not a spatial translate of any other solution in the family. However, we observe that $u_{\alpha , \beta }(x-x_0,t)$ is also a global solution to [CP] for any fixed $x_0\in\field{R}$. A trivial calculation from \eqref{xx1} establishes that 
\begin{equation} \label{xx2} (u_{\alpha , \beta})_x(x,t) = t^{\frac{1}{(1-p)} - \frac{1}{2}} w_{\alpha , \beta}'\left(\frac{x}{t^{1/2}}\right) , \end{equation}
\begin{equation} \label{xx3} (u_{\alpha , \beta})_{t}(x,t) = \frac{1}{(1-p)}t^{\frac{1}{(1-p)} - 1} \left( w_{\alpha , \beta}\left( \frac{x}{t^{1/2}}\right) - \frac{1}{2}(1-p)\left( \frac{x}{t^{1/2}}\right)w_{\alpha , \beta}' \left( \frac{x}{t^{1/2}}\right) \right) ,\end{equation}
for $(x,t)\in \field{R}\times (0,\infty )$, whilst from \eqref{i1},
\begin{equation} \label{xx4} (u_{\alpha , \beta})_{xx}(x,t) = (u_{\alpha , \beta})_{t}(x,t) - ( u_{\alpha , \beta }|u_{\alpha , \beta}|^{p-1} )(x,t) , \end{equation}
for $(x,t)\in \field{R}\times (0,\infty )$. It then follows immediately from Theorem \ref{thm9} that, 
\[ (u_{\alpha , \beta })_x,\ (u_{\alpha , \beta})_{t},\ (u_{\alpha , \beta})_{xx} \to 0 \text{ as } t\to 0^+ \text{ uniformly for }x\in\field{R} ,\]
and so, in fact, 
\begin{equation} \label{xx5} u_{\alpha , \beta} \in L^\infty (\field{R} \times [0,T])\cap C(\field{R}\times [0,T]) \cap C^{2,1} (\field{R} \times [0,T]) .\end{equation}
It follows from \eqref{xx5} that for each $(\alpha , \beta ) \in  \bar{D}_{c^*(p)}'$, and any $\tau>0$, then \linebreak[4] $u_{\alpha , \beta}^\tau :\field{R}\times [0,\infty )\to\field{R}$ such that 
\[ u_{\alpha , \beta}^\tau (x,t ) =  \begin{cases} (t-\tau )^{\frac{1}{(1-p)}} w_{\alpha , \beta }\left( \frac{x}{(t-\tau )^{1/2}}\right) &, (x,t)\in \field{R}\times (\tau , \infty ) \\ 0 &, (x,t)\in\field{R}\times [0,\tau ] \end{cases} \]
is also a non-trivial, localized, global solution to [CP]. Finally, we observe, via Theorem \ref{thm44} that for each $q>(1-p)/2$, then $u_{\alpha , \beta }(\cdot , t)\in L^q (\field{R}$ for each $t\geq 0$. Moreover, \eqref{d11} and \eqref{es1} suggest that the localization is Gaussian in $x$ for each $t > 0$.

\section{Heteroclinic Connections} \label{het}
In this section we establish the existence of at least one heteroclinic connection for (S) from the equilibrium point $(-(1-p)^{1/(1-p)},0)$ to the equilibrium point $((1-p)^{1/(1-p)},0)$.
\subsection{Existence}
We first consider solutions to the problem (S) for $\eta \in [0,\infty )$ and which remain in the region $\Omega\subset \field{R}^2$, given as
\begin{equation} \label{1'} \Omega = \left\{ (x,y) : 0<x<(1-p)^{1/(1-p)} , y>0 \right\} \end{equation}
with boundary $\partial \Omega = \overline{\Omega}\ \backslash\  \Omega$. We also define the following subset of $\partial \Omega$, namely, 
\begin{equation} \label{2'} \partial \Omega_1 = \left\{ (x,y) : x=0, y>0 \right\} . \end{equation} 
Specifically, we consider (S) for $\eta \in [0,\infty )$ and demonstrate that there exists a solution $(x,y):[0,\infty )\to \overline{\Omega}$ with zero-value $(0 , \beta)\in \partial \Omega_1$ and which satisfies 
\begin{equation} \label{2'c} (x(\eta ), y(\eta ))\in \Omega \ \ \ \forall \eta \in (0,\infty ) ,\end{equation}
\begin{equation} \label{2'd} (x(\eta ), y(\eta ))\to ((1-p)^{1/(1-p)}, 0) \ \ \ \text{ as } \eta \to \infty . \end{equation}

\noindent To begin with, it is readily established that for each zero-value $(0,\beta )\in \partial \Omega_1$, then (S) has a local solution {{\mbox{$(x,y):[0,\delta ] \to\field{R}^2$}} (for some $\delta >0$). Moreover, $(x(\eta ),y(\eta ))\in \Omega$ for $\eta \in (0,\delta ]$, and $x(\eta )$ is monotone increasing whilst $y(\eta )$ is monotone decreasing, with {\mbox{$\eta \in (0,\delta ]$.}} It is then straightforward to establish that $(x(\eta ),y(\eta ))$ can be {\emph{uniquely}} continued beyond $\eta = \delta $ and must satisfy one of the following three possibilities:
\begin{enumerate}
\item[(i)] There exists $\eta_\beta >0$ such that $(x(\eta ),y(\eta ))\in \Omega$ for all $\eta \in (0, \eta_\beta )$ and $(x(\eta_\beta ),y(\eta_\beta ))=((1-p)^{\frac{1}{1-p}},y_\beta )$ with $0 < y_\beta < \beta $, whilst $x'(\eta_\beta )= y_\beta >0$, and so there exists $\epsilon_\beta >0$ such that $(x(\eta ),y(\eta ))\not\in \bar\Omega \cup (\{ 0 \}\times \field{R})$ for $\eta \in (\eta_\beta , \eta_\beta +\epsilon_\beta ]$.
\item[(ii)] There exists $\eta_\beta >0$ such that $(x(\eta ),y(\eta ))\in \Omega$ for all $\eta \in (0, \eta_\beta )$ and $(x(\eta_\beta ),y(\eta_\beta ))=(x_\beta ,0 )$ with $0 < x_\beta < (1-p)^{\frac{1}{1-p}}$, whilst $y'(\eta_\beta ) <0$ and so there exists $\epsilon_\beta >0$ such that $(x(\eta ),y(\eta ))\not\in \bar\Omega \cup (\{ 0 \} \times \field{R})$ for $\eta \in (\eta_\beta , \eta_\beta +\epsilon_\beta ]$.
\item[(iii)] $(x(\eta ),y(\eta ))\in \Omega$ for all $\eta \in (0,\infty )$ and $(x(\eta ),y(\eta ))\to ((1-p)^{\frac{1}{1-p}},0)$ as $\eta\to \infty$.
\end{enumerate}
Our aim now is to obtain a uniqueness result for (S) with zero-value in $\partial \Omega_1$, and from this a continuous dependence result. This is non-trivial, since $\bf{Q}$ in \eqref{5} is not locally Lipschitz continuous in any neighborhood of $(0,\beta )\in \partial \Omega_1$, and so standard uniqueness and continuous dependence theory fail to apply. To begin with, we provide a local a priori bound for any solution of (S) with zero-value $(0,\beta )\in \partial\Omega_1$. 

\begin{prop} \label{A1'}
Let $(x,y):[0,\eta_\beta ]\to\field{R}^2$ be any solution to (S) with zero-value {\mbox{$(0,\beta )\in \partial \Omega_1$}} and which satisfies either case (i) or (ii). Then,
\begin{equation} \label{63*} \eta_\beta > \min \left\{\left(\frac{2}{\beta}\right)\left( m_H + \sqrt{m_H^2 + \frac{(\beta )^2}{2}}\right),\  \frac{(1-p)^{1/(1-p)}}{\beta} \right\}= \eta^* \end{equation}
with
\begin{equation} \label{A1a'} m_H = \inf_{\lambda \in [0,(1-p)^{1/(1-p)}]} \hspace{-5mm} H(\lambda ) ,\end{equation}
\end{prop}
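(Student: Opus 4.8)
The plan is to exploit the two structural facts valid while $(x(\eta),y(\eta))\in\Omega$, namely $x'=y>0$ and $y'=H(x)-\tfrac12\eta y$, where both terms in the latter are negative for $\eta\in(0,\eta_\beta)$ (since $x(\eta)\in(0,(1-p)^{1/(1-p)})$ forces $H(x)<0$, and $\eta y>0$). Thus $x$ increases strictly from $0$ while $y$ decreases strictly from $\beta$, giving $0<y(\eta)<\beta$ for all $\eta\in(0,\eta_\beta]$. At the outset I would also record that $H$ is continuous and strictly negative on $(0,(1-p)^{1/(1-p)})$ and vanishes at both endpoints, so that $m_H$ is attained, finite and \emph{strictly negative}; this is what makes $\eta^*>0$ and what controls the inequality senses below. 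Since $\eta^*=\min\{A,B\}$ with $A=\tfrac{2}{\beta}(m_H+\sqrt{m_H^2+\beta^2/2})$ and $B=(1-p)^{1/(1-p)}/\beta$, it suffices to prove $\eta_\beta>B$ in case (i) and $\eta_\beta>A$ in case (ii), because in either scenario the bound obtained dominates the minimum.

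Case (i) is immediate: here $x(\eta_\beta)=(1-p)^{1/(1-p)}$, and integrating $x'=y$ against the strict bound $y<\beta$ on $(0,\eta_\beta]$ gives $x(\eta_\beta)=\int_0^{\eta_\beta}y\,ds<\beta\eta_\beta$, whence $\eta_\beta>(1-p)^{1/(1-p)}/\beta=B\geq\eta^*$.

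Case (ii), where $y(\eta_\beta)=0$, carries the real content and is where the constant $\beta^2/2$ appears. I would work with the energy $\tfrac12 y^2$, computing $\frac{d}{d\eta}(\tfrac12 y^2)=yy'=yH(x)-\tfrac12\eta y^2$ along the flow, and then bound the two terms crudely but in a way that linearizes the estimate: $yH(x)\geq y\,m_H\geq\beta m_H$ (using $0<y\leq\beta$ together with $m_H<0$) and $-\tfrac12\eta y^2\geq-\tfrac12\eta\beta^2$ (using $y^2\leq\beta^2$, $\eta\geq0$). This produces $\frac{d}{d\eta}(\tfrac12 y^2)\geq\beta m_H-\tfrac12\beta^2\eta$, which is in fact strict for $\eta\in(0,\eta_\beta)$ since $y<\beta$ there. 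Integrating from $0$ to $\eta_\beta$ and inserting $y(0)=\beta$, $y(\eta_\beta)=0$ gives $-\tfrac12\beta^2>\beta m_H\eta_\beta-\tfrac14\beta^2\eta_\beta^2$, i.e., after dividing by $\beta>0$, the quadratic inequality $\tfrac14\beta\eta_\beta^2-m_H\eta_\beta-\tfrac12\beta>0$.

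Finally I would solve this quadratic: its roots are $\tfrac{2}{\beta}(m_H\pm\sqrt{m_H^2+\beta^2/2})$, of which the lower one is negative (as $m_H<0$) and the upper one is exactly $A$; since $\eta_\beta>0$, the strict inequality forces $\eta_\beta>A\geq\eta^*$, completing the proof. The delicate points — and where I would be most careful — are preserving the inequality directions when multiplying through by the negative quantity $m_H$, and verifying that the term-by-term bounds yield precisely the advertised constant $\beta^2/2$ rather than $\beta^2$; this hinges on replacing $y^2$ by $\beta^2$ in the diffusive term $-\tfrac12\eta y^2$ (not $y$ by $\beta$), which is what halves the constant emerging under the square root.
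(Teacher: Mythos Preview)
Your proof is correct, but takes a genuinely different route from the paper's. The paper argues by contradiction in one stroke: assuming $\eta_\beta\le\eta^*$, it integrates $y'=H(x)-\tfrac12\eta y\ge m_H-\tfrac12\eta\beta$ directly to obtain the pointwise lower bound $y(\eta)>\beta+m_H\eta-\tfrac{\beta}{4}\eta^2$, which stays above $\beta/2$ for $\eta\le A$; a second integration then gives $\tfrac{\beta}{2}\eta<x(\eta)<\beta\eta$, and evaluating at $\eta_\beta$ yields $(x(\eta_\beta),y(\eta_\beta))\in\Omega$, contradicting both exit cases simultaneously. You instead split cases and, in case~(ii), use an energy estimate for $\tfrac12 y^2$ rather than a direct bound on $y$. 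It is a pleasant algebraic coincidence that both routes land on the \emph{same} quadratic $\tfrac{\beta}{4}\eta^2-m_H\eta-\tfrac{\beta}{2}=0$: the paper reaches it by asking when its linear-in-$y$ lower bound hits $\beta/2$, while your quadratic-in-$y$ lower bound (which is one estimate cruder, since you bound $y$ by $\beta$ in two places rather than one) hits zero at exactly that point. Your approach is slightly more transparent about which branch of $\eta^*$ controls which case; the paper's approach has the advantage of also delivering the two-sided bounds $\tfrac{\beta}{2}\eta<x(\eta)$ and $\tfrac{\beta}{2}<y(\eta)$ on $[0,\eta^*]$, which are needed immediately afterwards in Corollary~\ref{A1} and in the uniqueness argument of Proposition~\ref{A2}. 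One cosmetic point: your assertion that $0<y(\eta)<\beta$ on $(0,\eta_\beta]$ fails at the right endpoint in case~(ii), but since you only use $0<y\le\beta$ on the open interval this does not affect the argument.
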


\begin{proof}
Let $(x,y):[0,\eta_\beta ]\to\field{R}^2$ be any solution to (S) with zero-value $(0,\beta )\in \partial \Omega_1$, and which satisfies either case (i) or case (ii). Suppose that $\eta_\beta \leq \eta^*$. Since $(x(\eta ),y(\eta ))\in \Omega$ for all $\eta \in (0,\eta_\beta )$, it follows from \eqref{4} that 
\begin{equation} \label{*} \beta + m_H\eta -\frac{\beta}{4}\eta^2 < y(\eta ) < \beta \ \ \ \forall \eta \in (0, \eta_\beta ] . \end{equation}
However, $\eta_\beta \leq \eta^*$ and so, via \eqref{*},
\begin{equation} \label{+} \frac{\beta}{2} < y(\eta ) < \beta \ \ \ \forall \eta \in (0,\eta_\beta ] . \end{equation}
An integration of \eqref{3} using \eqref{+}, then gives, 
\begin{equation} \label{1-} \frac{\beta \eta}{2} < x(\eta ) < \beta \eta \ \ \ \forall \eta \in (0,\eta_\beta ] . \end{equation}
It finally follows from \eqref{+} and \eqref{1-}, since $\eta_\beta \leq \eta^*$, that,
\[ x(\eta_\beta ) < \beta \eta_\beta\leq \beta \eta^* \leq (1-p)^{\frac{1}{1-p}},\ \ \  y(\eta_\beta ) > \frac{\beta}{2}, \]
and so $(x(\eta_\beta ),y(\eta_\beta )) \in \Omega$, which is a contradiction. We conclude that $\eta_\beta > \eta^*$, as required.
\end{proof}

\noindent Therefore, we have,

\begin{cor} \label{A1}
Let $(x,y):[0,\eta^*]\to\field{R}^2$ be a solution to (S) with zero-value $(0,\beta )\in \partial \Omega_1$ with $\eta^*$ given by \eqref{63*}. Then,
\[\frac{\beta\eta}{2} < x(\eta ) < (1-p)^{1/(1-p)} ,\ \ \  \frac{\beta}{2} < y(\eta ) < \beta \ \ \  \forall \eta \in [0,\eta^*] ,\]
\end{cor}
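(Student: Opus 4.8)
The plan is to recognize this as a quantitative refinement of Proposition \ref{A1'}: whereas that result established $\eta_\beta>\eta^*$ by contradiction, here I would run the same one-dimensional comparison estimates \emph{directly} on $[0,\eta^*]$ to extract the explicit two-sided bounds on $(x,y)$. The first task is to confirm that the orbit stays inside $\Omega$ on the whole interval, i.e. $(x(\eta),y(\eta))\in\Omega$ for all $\eta\in(0,\eta^*]$. This follows from the trichotomy preceding Proposition \ref{A1'}: in case (iii) the orbit never leaves $\Omega$, while in cases (i) and (ii) Proposition \ref{A1'} guarantees $\eta_\beta>\eta^*$, so escape from $\Omega$ cannot occur before $\eta^*$. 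In every case one then has $0<x(\eta)<(1-p)^{1/(1-p)}$ and $y(\eta)>0$ on $(0,\eta^*]$, which already supplies the upper bound $x(\eta)<(1-p)^{1/(1-p)}$.

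Next I would note that on $\Omega$ the nonlinearity satisfies $H(x)<0$ (since $H(\lambda)<0$ for $\lambda\in(0,(1-p)^{1/(1-p)})$ and $H((1-p)^{1/(1-p)})=0$), so \eqref{4} gives $y'(\eta)=H(x(\eta))-\tfrac12\eta y(\eta)<0$; hence $y$ is strictly decreasing with $y(\eta)<\beta$, which is the upper bound on $y$. For the lower bound I would integrate \eqref{4} using $H(x)\ge m_H$ together with $y<\beta$ to recover $y(\eta)>\beta+m_H\eta-\tfrac{\beta}{4}\eta^2$, exactly the left estimate in \eqref{*}.

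The key algebraic step is then to verify that the positive root of $\tfrac{\beta}{4}\eta^2-m_H\eta-\tfrac{\beta}{2}=0$ is precisely $\tfrac{2}{\beta}\bigl(m_H+\sqrt{m_H^2+\beta^2/2}\bigr)$ (the upward-opening quadratic is negative at $\eta=0$, so it stays nonpositive up to this root). Because this root is the first entry in the minimum defining $\eta^*$ in \eqref{63*}, it follows that $\beta+m_H\eta-\tfrac{\beta}{4}\eta^2\ge\beta/2$ for all $\eta\in[0,\eta^*]$, and hence $y(\eta)>\beta/2$. With the sandwich $\tfrac{\beta}{2}<y(\eta)<\beta$ in hand on $(0,\eta^*]$, integrating \eqref{3} yields $\tfrac{\beta\eta}{2}<x(\eta)<\beta\eta$, which simultaneously gives the remaining lower bound $x(\eta)>\tfrac{\beta\eta}{2}$ and, via $\eta\le\eta^*\le(1-p)^{1/(1-p)}/\beta$ (the second entry of the minimum in \eqref{63*}), re-confirms $x(\eta)<(1-p)^{1/(1-p)}$.

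I do not expect a serious obstacle here: the estimates are the same scalar comparison arguments already used in Proposition \ref{A1'}, and the only genuine content is the bookkeeping that attaches each of the two entries of the minimum defining $\eta^*$ to one of the two desired bounds. The one point requiring care is the endpoint $\eta=0$, where $x(0)=0$ and $y(0)=\beta$ make the strict inequalities degenerate; the argument is therefore properly carried out on $(0,\eta^*]$, with the statement read accordingly.
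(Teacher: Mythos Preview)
Your proposal is correct and is exactly the approach the paper has in mind: its proof is the two-line remark that cases (i) and (ii) follow from Proposition \ref{A1'} while case (iii) is immediate, and what you have written simply unpacks this by rerunning the estimates \eqref{*}, \eqref{+}, \eqref{1-} on $(0,\eta^*]$ once the trichotomy and Proposition \ref{A1'} guarantee the orbit remains in $\Omega$ there. Your observation about the strict inequalities failing at $\eta=0$ is also accurate and reflects a minor imprecision in the stated corollary rather than any defect in the argument.
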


\begin{proof}
For cases (i) and (ii), the result follows from Proposition \ref{A1'}, with case (iii) following immediately. 
\end{proof}

\noindent The a priori bounds in Corollary \ref{A1}, allow us to establish the following local uniqueness result for (S) with zero-value $(0 , \beta )\in \partial \Omega_1$. The proof is based on the uniqueness argument in \cite{JAME1}.

\begin{prop} \label{A2}
The problem (S) with zero-value $(0,\beta )\in \partial \Omega_1$ has at most one solution on $[0,\eta^*]$, with $\eta^*>0$ given by \eqref{63*}.
\end{prop}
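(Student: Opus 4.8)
The plan is to prove uniqueness by comparing two hypothetical solutions and showing their difference must vanish, exploiting the a priori bounds from Corollary~\ref{A1} to tame the non-Lipschitz nonlinearity. The fundamental difficulty is that $\mathbf{Q}$ fails to be Lipschitz precisely at $x=0$, which is exactly the zero-value point $(0,\beta)$ where both solutions start; hence the standard Picard--Lindel\"of contraction argument is unavailable near $\eta=0$. The key observation that rescues the argument is that, by Corollary~\ref{A1}, any solution satisfies the lower bound $x(\eta) > \beta\eta/2$ for $\eta \in (0,\eta^*]$, so the two solutions leave the troublesome plane $x=0$ immediately and at a controlled linear rate. This lower bound is what lets us control the one-sided difference quotient of the non-Lipschitz term $x|x|^{p-1}$.

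First I would suppose, for contradiction, that $(x_1,y_1)$ and $(x_2,y_2)$ are two distinct solutions to (S) on $[0,\eta^*]$ with the common zero-value $(0,\beta)$, and set $\phi = x_1 - x_2$, $\psi = y_1 - y_2$. Subtracting the equations \eqref{3}--\eqref{4} gives $\phi' = \psi$ and $\psi' = \tfrac{1}{(1-p)}\phi - \big(H(x_1)-H(x_2)\big) - \tfrac{1}{2}\eta\psi - \tfrac{1}{2}\eta\cdot 0$, where the genuinely nonlinear contribution is the difference $x_1|x_1|^{p-1} - x_2|x_2|^{p-1}$. On the interval where both solutions are positive and bounded below by $\beta\eta/2$, the map $x \mapsto x|x|^{p-1} = x^p$ has derivative $p x^{p-1}$, so by the mean value theorem $|x_1^p - x_2^p| \le p\,(\min\{x_1,x_2\})^{p-1}|\phi| \le p\,(\beta\eta/2)^{p-1}|\phi|$. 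Crucially the coefficient $(\beta\eta/2)^{p-1}$ blows up like $\eta^{p-1}$ as $\eta\to 0^+$, but since $p-1 \in (-1,0)$ this singularity is integrable, which is exactly the structure needed for a Gronwall-type argument weighted to absorb the singular coefficient.

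Next I would form an energy-type functional, for instance $E(\eta) = \tfrac{1}{2}(\phi^2 + \psi^2)$, and differentiate: $E'(\eta) = \phi\psi + \psi\psi'$. Substituting the subtracted equations and using the bound on $|x_1^p-x_2^p|$, one obtains an estimate of the form $E'(\eta) \le C\big(1 + \eta^{p-1}\big) E(\eta)$ for $\eta \in (0,\eta^*]$, where $C$ depends on $p$, $\beta$ and $\eta^*$ through the a priori bounds. Because $\int_0^{\eta^*}\big(1+\eta^{p-1}\big)\,d\eta < \infty$ (here $p>0$ guarantees $\eta^{p-1}$ is integrable at the origin), the integrating factor $\exp\!\big(-\int_0^\eta C(1+s^{p-1})\,ds\big)$ is well-defined and positive down to $\eta=0$. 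Since $E(0)=0$ and $E\ge 0$, Gronwall's inequality forces $E(\eta)\equiv 0$, hence $\phi\equiv\psi\equiv 0$, contradicting distinctness.

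The main obstacle I anticipate is the behaviour right at $\eta=0$: one must verify that the energy argument genuinely closes despite the singular coefficient $\eta^{p-1}$, and in particular that the one-sided mean value estimate is legitimate on an interval abutting $\eta=0$ where the lower bound $x_i > \beta\eta/2$ degenerates to $0$. I would handle this by first running the argument on $[\epsilon,\eta^*]$ for arbitrary $\epsilon>0$, where both solutions are uniformly bounded away from $0$ and $\mathbf{Q}$ is Lipschitz, establishing $E\equiv 0$ there by standard uniqueness; then letting $\epsilon\to 0^+$ and using continuity of $\phi,\psi$ together with $\phi(0)=\psi(0)=0$ to conclude. The delicate point is ensuring the uniqueness on $[\epsilon,\eta^*]$ propagates \emph{backward} to a neighbourhood of the origin rather than merely forward, which the integrability of $\eta^{p-1}$ and the common initial value $(0,\beta)$ together guarantee.
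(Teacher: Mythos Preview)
Your approach is correct and shares the same essential ingredient as the paper's proof --- namely the lower bound $x_i(\eta) > \beta\eta/2$ from Corollary~\ref{A1}, which via the mean value theorem converts the non-Lipschitz difference $x_1^p - x_2^p$ into a term bounded by $p(\beta\eta/2)^{p-1}|\phi|$ with an \emph{integrable} singular coefficient --- but the packaging differs. The paper proceeds in two stages: it first uses the crude H\"older-type inequality $|x_1^p - x_2^p| \le |x_1 - x_2|^p$ (valid since $0<p<1$) to obtain $v(\eta) \le \int_0^\eta K (v(s))^p\,ds$, which yields the polynomial bound $v(\eta) \le ((1-p)K\eta)^{1/(1-p)}$ and hence $v \le \delta$ on a short interval $[0,\eta_\delta]$; only then does it switch to the mean-value Lipschitz estimate on $[\eta_\delta,\eta^*]$, apply Gronwall there with initial value $O(\delta)$, and finally let $\delta\to 0$. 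Your route bypasses the H\"older stage entirely by applying Gronwall directly from $\eta=0$ with the singular but integrable weight $C(1+\eta^{p-1})$, which is cleaner. One caveat: your final paragraph is a red herring. Standard uniqueness on $[\epsilon,\eta^*]$ gives nothing, since the two solutions are not known to agree at $\eta=\epsilon$; the argument that actually works is the one you already gave in the third paragraph, where $E(0)=0$ and the finiteness of $\int_0^{\eta^*}(1+s^{p-1})\,ds$ make Gronwall close at the origin without any $\epsilon$-truncation.
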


\begin{proof}
To begin, fix $(0,\beta )\in \partial \Omega_1$. Suppose that $(x,y),(x^*,y^*):[0,\eta^*]\to\field{R}^2$ are solutions to (S) with zero-value $(0,\beta )$. It follows from Corollary \ref{A1} that 
\begin{equation} \label{A2a} (x(\eta ), y(\eta ) ),(x^*(\eta ),y^*(\eta )) \in \bar{\Omega} \ \ \ \forall \eta \in [0, \eta^* ] ,\end{equation}
whilst from Corollary \ref{A1}, 
\begin{equation} \label{A2aa} |x(\eta ) - x^*(\eta )| < (1-p)^{1/(1-p)},\ \ \ |y(\eta ) - y^*(\eta )| < \beta  \ \ \ \forall \eta \in [0,\eta^* ] .\end{equation}
Additionally, we observe that for $(X,Y)\in [0,  (1-p)^{1/(1-p)}] \times [0,\beta ]$, then
\begin{equation} \label{A2cc} X + X^p + Y < \left( 2 + \beta^{1-p} \right) ( X + Y )^p , \end{equation}
since $0<p<1$. Now, via \eqref{3} and \eqref{4} respectively, we have,
\begin{equation} \label{A2b} |x(\eta ) - x^*(\eta )| \leq \int_0^\eta |y(s) - y^*(s)|ds  \end{equation}
\begin{equation} \label{A2bb} |y(\eta ) - y^*(\eta )| \leq \int_0^\eta \left( \frac{1}{(1-p)}|x(s) - x^*(s)| + |x(s) - x^*(s)|^p + \frac{s}{2}|y(s) - y^*(s)|\right)   ds \end{equation}
for all $\eta\in [0,\eta^*]$. We next introduce $v:[0,\eta^*]\to\field{R}$ as,
\begin{equation} \label{A2w} v(\eta ) = |x(\eta ) - x^*(\eta )|  + |y(\eta ) - y^*(\eta )| \ \ \ \forall \eta\in [0,\eta^* ]. \end{equation}
Therefore, via \eqref{A2a}-\eqref{A2w}, it follows that
\begin{align}
\nonumber v(\eta ) \leq & \int_0^\eta \left( \frac{1}{(1-p)}|x(s) - x^*(s)| + |x(s) - x^*(s)|^p + \left(\frac{s}{2} +1 \right)|y(s) - y^*(s)|\right) ds \\
\nonumber \leq & \int_0^\eta \frac{1}{(1-p)}\left(\frac{\eta^*}{2} +1\right) \left( |x(s) - x^*(s)| + |x(s) - x^*(s)|^p + |y(s) - y^*(s)|\right) ds \\
\label{A2c} \leq & \int_0^\eta \frac{1}{(1-p)}\left(\frac{\eta^*}{2} +1\right) (2 + \beta^{1-p}) \left( v(s) \right)^p ds
\end{align}
for all $\eta \in [0,\eta^*]$, where the final inequality is due to \eqref{A2aa} and \eqref{A2cc}. Also, via Corollary \ref{A1} and \eqref{63*}, $\eta^*$ is dependent on $p$ and $\beta$ only, and hence, it follows from \eqref{A2c} that 
\begin{equation} \label{A2ccc} v(\eta ) \leq \int_0^\eta K(p,\beta ) \left( v(s) \right)^p ds \end{equation}
for all $\eta \in[0,\eta^*]$, where the constant $K(p,\beta )$ is given by,
\[ K(p,\beta ) =\frac{1}{(1-p)}\left(\frac{\eta^*}{2} +1\right) (2 + \beta^{1-p}) .\]
We now introduce the function $\bar{H}:[0,\eta^*]\to\overline{\field{R}}_+$ given by,
\begin{equation} \label{A2d} \bar{H}(\eta ) = \int_0^\eta K(p,\beta ) \left( v(s) \right)^p ds \ \ \ \forall \eta\in [0,\eta^* ].\end{equation}
It follows from \eqref{A2d} that $\bar{H}$ is non-negative, non-decreasing and differentiable on $[0,\eta^*]$, and via \eqref{A2ccc}, satisfies 
\begin{equation} \label{A2e} (\bar{H}(s))' \leq K(p,\beta )(\bar{H}(s))^p \ \ \ \forall s\in [0,\eta^* ] .\end{equation}
Upon integrating \eqref{A2e} from $0$ to $\eta$, we obtain
\begin{equation} \label{A2f}  \bar{H}(\eta ) \leq ((1-p)K(p,\beta )\eta )^{1/(1-p)} \ \ \ \forall \eta\in[0,\eta^* ] \end{equation}
and it follows from \eqref{A2f}, \eqref{A2d} and \eqref{A2ccc} that 
\begin{equation} \label{A2ff} v(\eta ) \leq \delta \ \ \ \forall \eta \in \left[ 0, \eta_\delta \right],\end{equation}
where $\delta >0$ is chosen sufficiently small so that
\[  \eta_\delta =  \frac{\delta^{1-p}}{ (1-p)K(p,\beta )} < \eta^* .\]
Now, from Corollary \ref{A1}, we have 
\begin{equation} \label{A2gg} \min \{x^* (\eta ) , x(\eta ) \} \geq \frac{\beta\eta}{2} \ \ \ \forall \eta \in [0, \eta^* ] .\end{equation}
Moreover, it follows from \eqref{5}, \eqref{A2gg} and the mean value theorem, that there exists $\theta (s) \geq \min \{x^*(s) , x(s)\}$, for which,
\begin{align}
\nonumber |Q_2(x(s ) &, y(s ) , s )  - Q_2(x^*(s ), y^*(s ) , s )|  \\
\nonumber & \leq \frac{1}{(1-p)}|x(s) - x^*(s)| + |x(s)^p - x^*(s)^p| +  \frac{ s}{2}  |y(s) - y^*(s)| \\
\nonumber & \leq \frac{1}{(1-p)}|x(s) - x^*(s)| + p(\theta ( s ))^{p-1}|x(s) - x^*(s)| + \frac{ \eta^*}{2} |y(s) - y^*(s)| \\
\nonumber & \leq \left( \frac{1}{(1-p)} + p\left( \frac{\beta s }{2}\right)^{p-1} \right) |x(s) - x^*(s)| +  \frac{\eta^*}{2} |y(s) - y^*(s)| \\
\label{A2g} & \leq \left( \frac{1}{(1-p)} + p\left(\frac{\beta s }{2}\right)^{p-1} + \frac{\eta^*}{2} \right) v(s ) 
\end{align}
for all $s\in (0,\eta^*]$. Now, via \eqref{3}, \eqref{4}, \eqref{5}, \eqref{A2cc}, \eqref{A2g} and \eqref{A2ff}, we have,
\begin{align}
\nonumber v(\eta ) \leq & \int_0^\eta \left( |Q_1(x(s) , y(s) , s )  - Q_1(x^*(s), y^*(s) , s)| \right. \\
\nonumber &\ \ \ \ \  + \left. |Q_2(x(s) , y(s) , s )  - Q_2(x^*(s ), y^*(s ) , s )| \right) ds \\
\nonumber \leq & \int_0^{\eta_\delta} K(p,\beta ) (v(s))^p ds + \int_{\eta_\delta}^\eta \left( 1 + \frac{1}{(1-p)} + p\left(\frac{\beta s }{2}\right)^{p-1} + \frac{\eta^*}{2} \right) v(s)ds \\
\label{A2h} \leq & \frac{\delta}{ (1-p)}  + \int_{\eta_\delta}^\eta \left( 1 + \frac{1}{(1-p)} + p\left(\frac{\beta s }{2}\right)^{p-1} + \frac{\eta^*}{2} \right) v(s ) ds
\end{align}
for all $\eta \in [\eta_\delta , \eta^* ]$. An application of Gronwall's Lemma \cite[Corollary 6.2]{HA1} to \eqref{A2h}, gives
\begin{equation} \label{A2i} v(\eta ) \leq \frac{\delta}{ (1-p)} e^{\left(  \eta^*\left( 1 + \frac{1}{(1-p)} + \left(\frac{\beta \eta^* }{2}\right)^{p-1} + \frac{\eta^*}{2} \right) \right)} \end{equation}
for all $\eta\in [\eta_\delta , \eta^*]$. Since $v$ is non-negative and $\eta^*$ is independent of $\delta$, it follows from \eqref{A2i} and \eqref{A2ff}, upon letting $\delta \to 0$, that 
\begin{equation} \label{A2j} v(\eta ) = 0 \ \ \ \forall \eta \in [0,\eta^*]. \end{equation}
Finally, it follows from \eqref{A2j} and \eqref{A2w} that 
\[ (x(\eta ),y(\eta ))= (x^*(\eta ),y^*(\eta )) \ \ \ \forall \eta \in [0, \eta^* ], \]
as required.
\end{proof}

\noindent We can now state the following uniqueness result.

\begin{lem} \label{lem3.4}
For each $(0,\beta )\in \partial \Omega_1$ then (S) with zero-value $(0,\beta )$ has exactly one solution $(x,y):I\to\field{R}^2$. This solution satisfies exactly one of the cases: (i) (with $I=[0,\eta_\beta +\epsilon_\beta ]$), (ii) (with $I=[0,\eta_\beta +\epsilon_\beta ]$) or (iii) (with $I=[0,\infty )$).
\end{lem}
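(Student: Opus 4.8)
The plan is to assemble the lemma from the pieces already in place: the local existence together with the trichotomy (i)--(iii), the non-Lipschitz local uniqueness of Proposition~\ref{A2}, and standard Picard--Lindel\"of uniqueness in the interior of $\Omega$, where $\mathbf{Q}$ is locally Lipschitz. Existence is immediate: fix $(0,\beta)\in\partial\Omega_1$; the local solution on $[0,\delta]$ (with $(x(\eta),y(\eta))\in\Omega$ for $\eta\in(0,\delta]$, $x$ increasing and $y$ decreasing) continues uniquely inside $\Omega$, and, as already recorded, its maximal continuation must realise exactly one of (i), (ii) or (iii), with the corresponding interval $I$. So at least one solution exists and satisfies one of the three cases; it remains to prove uniqueness.

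For uniqueness, suppose $(x,y)$ and $(x^*,y^*)$ are two solutions sharing the zero-value $(0,\beta)$. First I would observe that, whichever of (i)--(iii) each realises, Proposition~\ref{A1'} (for cases (i) and (ii)) together with case (iii) directly guarantee that both solutions exist and remain in $\bar\Omega$ on the whole interval $[0,\eta^*]$, with $\eta^*$ given by \eqref{63*}. Proposition~\ref{A2} then forces
\[ (x(\eta),y(\eta)) = (x^*(\eta),y^*(\eta)) \ \ \ \forall \eta\in[0,\eta^*]. \]
This is the step that defeats the failure of the Lipschitz condition at the boundary point $(0,\beta)$, and it is the only place where the delicate Gronwall-type estimate is needed.

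With coincidence secured up to $\eta^*$, I would pass to the interior. By Corollary~\ref{A1} the common value $(x(\eta^*),y(\eta^*))$ satisfies $0<x(\eta^*)<(1-p)^{1/(1-p)}$ and $y(\eta^*)>\beta/2>0$, hence lies in the open set $\Omega$, on which $\mathbf{Q}$ is locally Lipschitz. Standard uniqueness for the resulting initial value problem then propagates the coincidence of the two solutions along their maximal continuation within $\Omega$. Since $x$ is increasing and $y$ decreasing throughout $\Omega$, neither solution can return towards the line $x=0$, so the only ways to leave $\bar\Omega$ are through $x=(1-p)^{1/(1-p)}$ (case (i)) or through $y=0$ (case (ii)), the alternative being that the trajectory remains in $\Omega$ for all $\eta$ (case (iii)). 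Because the continuation is now unique, the exit point, the exit time, and hence the interval $I$ and the realised case are determined identically for $(x,y)$ and $(x^*,y^*)$; as the three terminal behaviours are pairwise disjoint, exactly one of them holds.

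I expect the only genuine obstacle to be the bookkeeping in the second paragraph, namely confirming that every solution, regardless of its eventual case, is defined and stays in $\bar\Omega$ at least up to $\eta^*$ so that Proposition~\ref{A2} is applicable. The hard analytic work (the non-Lipschitz uniqueness on $[0,\eta^*]$) has already been carried out in Proposition~\ref{A2}, and uniqueness beyond $\eta^*$ is routine ordinary differential equation theory in the Lipschitz region, so the lemma is essentially an assembly of the preceding results.
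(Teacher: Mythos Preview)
Your proposal is correct and follows essentially the same approach as the paper: existence plus the trichotomy are taken as already established, Proposition~\ref{A2} gives uniqueness on $[0,\eta^*]$, and then classical (Picard--Lindel\"of) uniqueness is invoked beyond $\eta^*$ since the trajectory stays away from the plane $x=0$ where the Lipschitz condition fails. The paper's own proof is more terse---it simply records that $(x(\eta),y(\eta))\notin\{(0,\lambda):\lambda\in\field{R}\}$ for $\eta\in I\setminus[0,\eta^*]$ and then cites the classical uniqueness theorem---whereas you spell out the bookkeeping (via Proposition~\ref{A1'} and Corollary~\ref{A1}) ensuring both solutions exist on $[0,\eta^*]$ and land in the Lipschitz region at $\eta^*$; this extra care is appropriate and does not change the substance of the argument.
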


\begin{proof}
We have established earlier that for each $(0,\beta )\in \partial \Omega_1$, then (S) with zero-value $(0,\beta )$ has at least one solution $(x,y):I\to\field{R}^2$, and that the solution satisfies one of the cases (i)-(iii). It follows from Proposition \ref{A2} that this solution is unique for $\eta \in [0,\eta^* ]$, (with $\eta^*$ depending only upon $\beta$ and $p$) and, moreover, in whichever case of (i)-(iii) it falls, that $(x(\eta ),y(\eta ))\not\in \{ (0, \lambda ):\lambda \in \field{R} \}$ for any $\eta \in I\ \backslash\ [0,\eta^*]$. Repeated application of the classical uniqueness theorem \cite[Chapter 1, Theorem 2.2]{CODLEV1} then completes the uniqueness result for $\eta \in I\ \backslash\ [0,\eta^*]$.
\end{proof}

\noindent We immediately obtain a continuous dependence result for solutions of (S) with zero-value in $\partial \Omega_1$, namely,

\begin{cor} \label{cor3.6}
Let $(0,\beta^*)\in \partial \Omega_1$ and suppose that the unique solution to (S) with zero-value $(0,\beta^*)$, say {\mbox{$(x^*,y^*):I\to\field{R}$,}} satisfies case (i) or (ii), with $I=[0,\eta_{\beta^*} + \epsilon_{\beta^*} ]$. Then, given $\epsilon ' >0$, there exists $\delta ' >0$ such that for all $\beta >0$ satisfying $|\beta - \beta^*|<\delta '$, the corresponding unique solution to (S) with zero-value $(0,\beta )$, say $(x,y):I'\to\field{R}$, has $I'=I$ and satisfies the corresponding case (i) or (ii), with, 
\[ |x(\eta ) - x^*(\eta )| + |y(\eta ) - y^*(\eta )| < \epsilon ' \ \ \ \forall \eta \in I. \]
\end{cor}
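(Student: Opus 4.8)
The plan is to exploit the fact that, although $\mathbf{Q}$ in \eqref{5} fails to be locally Lipschitz only on the plane $\{x=0\}$, both solutions leave this plane immediately: by the a priori bounds of Corollary \ref{A1} we have $x(\eta),x^*(\eta)\geq \tfrac{1}{2}\beta_{\min}\eta$ for small $\eta$, where $\beta_{\min}=\min\{\beta,\beta^*\}$. I would therefore split $I=[0,\eta_{\beta^*}+\epsilon_{\beta^*}]$ at a fixed small abscissa $\bar\eta>0$ chosen independently of $\beta$ for $\beta$ near $\beta^*$ (possible because the quantity $\eta^*$ in \eqref{63*} is continuous and strictly positive in $\beta$, so $\eta^*_\beta\geq \tfrac12\eta^*_{\beta^*}=:\bar\eta$ on a neighbourhood of $\beta^*$). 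On $[0,\bar\eta]$ I control the propagation of the initial discrepancy $|\beta-\beta^*|$ through the non-Lipschitz layer by hand, and on $[\bar\eta,\eta_{\beta^*}+\epsilon_{\beta^*}]$ the reference trajectory is bounded away from $\{x=0\}$, where classical Lipschitz theory applies.

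For the first interval, both solutions exist on $[0,\bar\eta]$ and obey the bounds of Corollary \ref{A1} (in particular $x,x^*>0$ for $\eta>0$). Writing $v(\eta)=|x(\eta)-x^*(\eta)|+|y(\eta)-y^*(\eta)|$, the integrated forms of \eqref{3}--\eqref{4} give $v(0)=|\beta-\beta^*|$, and, exactly as in the proof of Proposition \ref{A2}, the only dangerous contribution is $|x(s)^p-x^*(s)^p|$. Applying the mean value theorem with an intermediate point $\theta(s)\geq \tfrac12\beta_{\min}s$ bounds this by $p(\tfrac12\beta_{\min}s)^{p-1}|x(s)-x^*(s)|$, turning the estimate into a linear integral inequality $v(\eta)\leq |\beta-\beta^*|+\int_0^\eta g(s)v(s)\,ds$ with kernel $g(s)=1+\tfrac{1}{1-p}+\tfrac12\bar\eta+p(\tfrac12\beta_{\min}s)^{p-1}$. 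Since $0<p<1$, the singular factor $s^{p-1}$ is integrable on $[0,\bar\eta]$, so $g\in L^1([0,\bar\eta])$ with bound uniform in $\beta$ (using $\beta_{\min}\geq\beta^*-\delta'>0$). The generalized Gronwall inequality \cite[Corollary 6.2]{HA1} then yields $v(\eta)\leq |\beta-\beta^*|\exp(\int_0^{\bar\eta}g)$ on $[0,\bar\eta]$; in particular $v(\bar\eta)\to 0$ as $\beta\to\beta^*$.

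On the second interval I use that, since $(x^*,y^*)\in\Omega$ for $\eta\in(0,\eta_{\beta^*})$ and leaves $\bar\Omega\cup(\{0\}\times\mathbb{R})$ only after $\eta_{\beta^*}$, the reference trajectory satisfies $x^*(\eta)\geq m>0$ on the compact set $[\bar\eta,\eta_{\beta^*}+\epsilon_{\beta^*}]$ (shrinking $\epsilon_{\beta^*}$ if necessary). Hence $\mathbf{Q}$ is locally Lipschitz on a tubular neighbourhood of the graph of $(x^*,y^*)$ over this interval, and the classical continuous-dependence theorem (\cite[Chapter 1]{CODLEV1}) propagates the discrepancy $v(\bar\eta)$ forward: given $\epsilon'>0$ there is $\rho>0$ such that $v(\bar\eta)<\rho$ forces the solution launched from $(x(\bar\eta),y(\bar\eta))$ to exist on all of $[\bar\eta,\eta_{\beta^*}+\epsilon_{\beta^*}]$ and stay within $\epsilon'$ of $(x^*,y^*)$. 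Because the exit is transversal --- $x^{*\prime}(\eta_{\beta^*})=y_{\beta^*}>0$ in case (i), $y^{*\prime}(\eta_{\beta^*})<0$ in case (ii) --- closeness forces the perturbed solution to remain in $\Omega$ up to a nearby first exit time $\eta_\beta$, to cross the same boundary component transversally, and to leave $\bar\Omega\cup(\{0\}\times\mathbb{R})$ thereafter; thus it realizes the corresponding case, and, using the freedom in the choice of $\epsilon_\beta$, we may take $I'=I$. Choosing $\delta'$ so small that both $|\beta-\beta^*|\exp(\int_0^{\bar\eta}g)<\epsilon'$ and $v(\bar\eta)<\rho$ completes the argument.

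The main obstacle is the first interval: transporting the perturbation $|\beta-\beta^*|$ across the non-Lipschitz layer at $\{x=0\}$. Ordinary continuous-dependence theory is unavailable there, and the naive Lipschitz ``constant'' $p\,x^{p-1}$ blows up as $x\to 0^+$. The argument survives only because the a priori lower bound $x(s)\geq\tfrac12\beta_{\min}s$ from Corollary \ref{A1} replaces this blow-up by the integrable singular kernel $s^{p-1}$, and because $\beta_{\min}$ stays bounded below by $\beta^*-\delta'>0$, keeping the Gronwall constant uniform in $\beta$. The remaining work is bookkeeping: the uniform choice of $\bar\eta$ and $m$, and converting transversality into the persistence of the case and the identification $I'=I$.
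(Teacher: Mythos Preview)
Your argument is correct, but it takes a genuinely different route from the paper's.

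The paper's proof is a two-line invocation of an abstract result. It observes that $\mathbf{Q}$ is continuous (hence bounded) on the rectangle
\[
R=\bigl\{(x,y,\eta):|x|\le (1-p)^{1/(1-p)}+1,\ |y|\le\beta^*+1,\ 0\le\eta\le\eta_{\beta^*}+\epsilon_{\beta^*}\bigr\},
\]
and then appeals to the Kamke-type continuous-dependence theorem \cite[Chapter~1, Theorem~4.3, p.~59]{CODLEV1}: for a merely continuous vector field, continuous dependence on initial data follows automatically once \emph{uniqueness} of solutions through the relevant initial points is known. That uniqueness is precisely Lemma~\ref{lem3.4}, whose proof (via Proposition~\ref{A2}) already contains the singular-Gronwall estimate you rediscover. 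The paper therefore does not need to split the interval or track constants: all of that work has been absorbed into the uniqueness lemma, and the abstract theorem converts uniqueness into continuous dependence in one step.

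Your approach instead proves continuous dependence directly, bypassing the abstract theorem. You split $I$ at $\bar\eta$, use the lower bound $x(s),x^*(s)\ge\tfrac12\beta_{\min}s$ from Corollary~\ref{A1} to obtain the integrable kernel $s^{p-1}$ on $[0,\bar\eta]$, apply Gronwall there, and then use classical Lipschitz continuous dependence on $[\bar\eta,\eta_{\beta^*}+\epsilon_{\beta^*}]$ where $x^*$ is bounded away from zero. This is essentially a quantitative refinement of the proof of Proposition~\ref{A2}, now comparing solutions with \emph{different} zero-values $(0,\beta)$ and $(0,\beta^*)$ rather than two putative solutions with the same zero-value. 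What you gain is self-containedness and an explicit modulus of continuity (the factor $\exp(\int_0^{\bar\eta}g)$); what the paper's route gains is brevity and the avoidance of duplicated effort, since the hard analysis has already been done once in establishing uniqueness.
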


\begin{proof}
We first recall that (for a suitable choice of $\beta^*$) then 
\[ |x^*(\eta )| \leq (1-p)^{\frac{1}{1-p}}+1,\ \ \ |y^*(\eta)| \leq \beta^*+1 \ \ \ \forall \eta \in [0,\eta_{\beta^*} +\epsilon_{\beta^*} ] ,\]
and, via \eqref{5}, that ${\bf{Q}}(x,y,\eta )$ is continuous (and therefore bounded) on the rectangle 
\[ R=\left\{ (x,y,\eta ) : |x|\leq (1-p)^{\frac{1}{1-p}} +1,\ \ \ |y| \leq \beta^*+1,\ \ \ 0 \leq \eta \leq \eta_{\beta^*} +\epsilon_{\beta^*} \right\} . \]
The {\emph{uniqueness result}} in Lemma \ref{lem3.4} then allows for an application of the result \cite[Theorem 4.3, pp. 59]{CODLEV1} which completes the proof.
\end{proof}

\noindent It is now convenient to introduce the three sets $E_1$, $E_2$ and $E_3$, where
\[ E_1=\{ (0,\beta )\in \partial \Omega_1  \text{: the unique solution to (S)}\ \ \  \] \[\ \ \ \ \ \ \  \text{with zero-value }(0,\beta ) \text{ satisfies case (i)} \} ,\]
with $E_2$ and $E_3$ defined similarly for cases (ii) and (iii) respectively. It follows from {\mbox{Lemma \ref{lem3.4}}} that
\begin{equation} \label{dduu} E_i\cap E_j = \emptyset\ \text{ for }\ i,j=1,2,3 \text{ with } i\not= j ,\end{equation}
whilst
\begin{equation} \label{dduul} E_1\cup E_2 \cup E_3 = \partial \Omega_1 . \end{equation}
We now establish that $E_1$ and $E_2$ are both nonempty.

\begin{prop} \label{A8}
The set $E_1$ is non-empty and is such that $(0,\beta )\in E_1$ for each  
\begin{equation} \label{A8d} \beta > \sqrt{2\left( ((1-p)^{1/(1-p)} - m_H)^2 - m_H^2 \right)}  ,\end{equation}
with $m_H$ given by \eqref{A1a'}.
\end{prop}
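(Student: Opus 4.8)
The plan is to argue by contradiction, exploiting the fact that when $\beta$ is large the initial velocity is so great that the horizontal coordinate $x$ is driven across the right-hand wall $x=(1-p)^{1/(1-p)}$ before the vertical coordinate $y$ has time to decay to zero. Writing $a=(1-p)^{1/(1-p)}$, I would base everything on the pointwise lower bound established inside the proof of Proposition \ref{A1'}, namely
\[ y(\eta) > \beta + m_H\eta - \tfrac{\beta}{4}\eta^2 =: g(\eta), \]
which is inequality \eqref{*} and is valid for every $\eta$ at which the trajectory with zero-value $(0,\beta)$ still lies in $\Omega$ (its derivation uses only $0<x<a$, so that $H(x)\ge m_H$, together with $0<y<\beta$). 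The decisive time scale is $\eta^{**} = \frac{2}{\beta}\big(m_H+\sqrt{m_H^2+\beta^2/2}\big)$, which is precisely the first term in the definition \eqref{63*} of $\eta^*$ and is the positive root of $g(\eta)=\beta/2$. Since $g'(\eta)=m_H-\tfrac{\beta}{2}\eta<0$ on $[0,\infty)$, the function $g$ is strictly decreasing from $g(0)=\beta$, so $g(\eta)>\beta/2$ for all $\eta\in[0,\eta^{**})$.

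First I would record the arithmetic equivalence that produces the stated threshold. Because $a-m_H>0$, the hypothesis \eqref{A8d}, i.e. $\beta^2>2\big((a-m_H)^2-m_H^2\big)=2a^2-4am_H$, is equivalent to $m_H^2+\beta^2/2>(a-m_H)^2$, hence to $\sqrt{m_H^2+\beta^2/2}>a-m_H$, i.e.
\[ \tfrac{\beta}{2}\,\eta^{**} = m_H+\sqrt{m_H^2+\beta^2/2} > a. \]
Next, on any interval on which the trajectory remains in $\Omega$, the lower bound gives $x'(\eta)=y(\eta)>g(\eta)>\beta/2$ for $\eta\in[0,\eta^{**})$, and therefore, integrating from $\eta=0$ with $x(0)=0$, $x(\eta)>\beta\eta/2$. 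In particular, if the trajectory were to remain in $\Omega$ throughout $[0,\eta^{**}]$ we would obtain $x(\eta^{**})>\tfrac{\beta}{2}\eta^{**}>a$, contradicting $x<a$ on $\Omega$.

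It then remains to exclude cases (ii) and (iii). Suppose, for contradiction, that $(0,\beta)\notin E_1$, so the unique solution (Lemma \ref{lem3.4}) falls under case (ii) or (iii). In case (iii) the trajectory lies in $\Omega$ for all $\eta\in(0,\infty)$, hence on $[0,\eta^{**}]$, and the previous paragraph gives the contradiction at once. In case (ii) the trajectory lies in $\Omega$ on $(0,\eta_\beta)$ with $y(\eta_\beta)=0$; since $y>g$ on $(0,\eta_\beta)$, continuity forces $g(\eta_\beta)\le 0$, so $\eta_\beta$ is at least the positive root of $g$, which strictly exceeds $\eta^{**}$ because $g(\eta^{**})=\beta/2>0$. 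Thus $[0,\eta^{**}]\subset[0,\eta_\beta)$, the trajectory again remains in $\Omega$ on $[0,\eta^{**}]$, and the same contradiction applies. Hence case (i) must hold, i.e. $(0,\beta)\in E_1$, so $E_1\neq\emptyset$.

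I anticipate that the main obstacle is not the estimate itself but the need to avoid a circular argument: the lower bound $y>g$ is available only while the trajectory is known to remain in $\Omega$, yet it is precisely the confinement to $\Omega$ (up to the crossing of $x=a$) that is in question. The trichotomy furnished by Lemma \ref{lem3.4} is what breaks this circularity, since in each of the two cases to be excluded one can certify independently that the solution stays in $\Omega$ at least up to $\eta^{**}$—trivially in case (iii), and in case (ii) because the bound $y>g$ itself prevents $y$ from vanishing before the root of $g$, which lies beyond $\eta^{**}$.
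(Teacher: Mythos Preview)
Your proof is correct and follows essentially the same approach as the paper. Both arguments rest on the lower bound $y(\eta)>g(\eta)=\beta+m_H\eta-\tfrac{\beta}{4}\eta^2$ (valid while the trajectory stays in $\Omega$), the consequence $x(\eta)>\tfrac{\beta}{2}\eta$ on $[0,\eta^{**}]$ with $\eta^{**}=\tfrac{2}{\beta}\bigl(m_H+\sqrt{m_H^2+\beta^2/2}\bigr)$, and the equivalence of \eqref{A8d} with $\tfrac{\beta}{2}\eta^{**}>(1-p)^{1/(1-p)}$. The only cosmetic difference is in the elimination of case~(ii): the paper first rules out $\eta_\beta>\eta^{**}$ via the $x$--bound and then uses $y(\eta_\beta)\ge\beta/2>0$ to contradict $y(\eta_\beta)=0$, whereas you first use $g(\eta_\beta)\le 0$ to force $\eta_\beta>\eta^{**}$ and then invoke the $x$--bound; the content is identical.
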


\begin{proof}
Let $(x,y):I\to\field{R}^2$ be the unique solution to (S) with zero-value $(0,\beta )\in \partial \Omega_1$ and $\beta$ satisfying \eqref{A8d}. Since $(x(\eta ),y(\eta ))\in \bar{\Omega}$ for all $\eta \in I'$ (where $I'=[0,\eta_\beta ]$ for cases (i) and (ii), and $I'=[0,\infty )$ for case (iii)) then, via \eqref{3} and \eqref{4}, we have,
\begin{equation} \label{vv} \frac{\beta}{2} \leq y(\eta ) \leq \beta , \ \ \ x(\eta ) \geq \frac{\beta \eta }{2}\ \ \ \forall \eta \in [0,\bar{\eta}_\beta ], \end{equation}
with,
\begin{equation} \label{v} \bar{\eta}_\beta = \begin{cases} \min \{ \eta_\beta , \eta_\beta '\} &: \text{ cases (i) and (ii)} \\ \eta_\beta ' &: \text{ case (iii) } \end{cases} \end{equation}
and
\[ \eta_\beta ' = \frac{2}{\beta}\left( m_H + \sqrt{m_H^2 + \frac{\beta^2}{2}}\right). \]
Now suppose case (iii) occurs, then $(x(\eta_\beta ' ),y(\eta_\beta '))\in \Omega$. However, 
\[ x(\eta_\beta ') \geq \frac{\beta \eta_\beta '}{2} =  m_H + \sqrt{m_H^2 + \frac{\beta^2}{2}} > (1-p)^{\frac{1}{1-p}} \]
via \eqref{vv} and \eqref{A8d}, and we arrive at a contradiction. We can therefore eliminate case (iii). Next suppose case (ii) occurs. It follows from \eqref{vv}$_2$ and \eqref{v} that $\eta_\beta \leq \eta_\beta '$, and so $\bar{\eta}_\beta = \eta_\beta$. Thus, via \eqref{vv}$_1$,
\[ y(\eta_\beta ) \geq \frac{\beta}{2} > 0. \]
However, in case (ii), $y(\eta_\beta ) = 0$, and we arrive at a contradiction. We conclude finally that case (i) must occur, as required. 
\end{proof}

\noindent We can also establish a similar result for $E_2$.

\begin{prop} \label{A7}
The set $E_2$ is non-empty and is such that $(0,\beta )\in E_2$ for each
\begin{equation} \label{A7a}  0< \beta < \sqrt{\frac{(1-p)^{2/(1-p)}}{(1+p)}}. \end{equation}
\end{prop}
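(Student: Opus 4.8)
The plan is to use the function $V$ from \eqref{7} as a Lyapunov functional to eliminate cases (i) and (iii) of the trichotomy, so that, by the disjointness and exhaustiveness in \eqref{dduu}--\eqref{dduul} together with the uniqueness in Lemma \ref{lem3.4}, the unique solution with zero-value $(0,\beta )$ must fall into case (ii), i.e.\ $(0,\beta )\in E_2$. Fix $\beta$ satisfying \eqref{A7a} and let $(x,y)$ be the corresponding unique solution. The starting point is the observation that the stated upper bound on $\beta$ is exactly the condition $\tfrac12\beta^2<c^*(p)$, with $c^*(p)$ as in \eqref{13}; since $V(0,\beta )=\tfrac12\beta^2$ by \eqref{7}, this says that the trajectory begins strictly below the critical level $c^*(p)$.

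First I would record that $F(\eta )=V(x(\eta ),y(\eta ))$ is non-increasing for $\eta \geq 0$, by \eqref{15}, so that the trajectory remains in the sublevel set $\{V\leq \tfrac12\beta^2\}$; explicitly,
\[ V(x(\eta ),y(\eta ))\leq V(0,\beta )=\tfrac12\beta^2<c^*(p) \]
for every $\eta$ in the domain of the solution. The crucial algebraic fact, obtained by evaluating \eqref{7} at $x=(1-p)^{1/(1-p)}$ (where the $x$-dependent part of $V$ equals precisely $c^*(p)$), is that
\[ V\!\left((1-p)^{1/(1-p)},y\right)=\tfrac12 y^2+c^*(p)\geq c^*(p)\qquad \forall\,y\in\field{R}. \]

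With these two facts in hand I would exclude case (i): there the trajectory reaches $((1-p)^{1/(1-p)},y_\beta )$ with $y_\beta >0$, at which point $V\geq c^*(p)$, contradicting the strict upper bound displayed above. Likewise I would exclude case (iii): there $(x(\eta ),y(\eta ))\to ((1-p)^{1/(1-p)},0)$ as $\eta \to\infty$, so by continuity of $V$ (recall \eqref{8}) one would have $F(\eta )\to V((1-p)^{1/(1-p)},0)=c^*(p)$, again contradicting $F(\eta )\leq \tfrac12\beta^2<c^*(p)$.

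Having ruled out cases (i) and (iii), the partition \eqref{dduu}--\eqref{dduul} forces the unique solution to satisfy case (ii), which gives $(0,\beta )\in E_2$ and in particular $E_2\neq \emptyset$. This argument is short and I do not expect a genuine obstacle; the only point that requires care is the computation verifying the identity $V((1-p)^{1/(1-p)},y)=\tfrac12 y^2+c^*(p)$ together with its equivalence to $\tfrac12\beta^2<c^*(p)$, which is precisely what makes the threshold in \eqref{A7a} the natural one. Note also that, in contrast to Proposition \ref{A8} (which ruled \emph{in} case (i) via the monotone lower bound on $y$), here the entire argument is driven by the monotonicity of $V$ rather than by the componentwise bounds of Corollary \ref{A1}.
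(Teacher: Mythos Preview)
Your proposal is correct and follows essentially the same approach as the paper: both arguments exploit that \eqref{A7a} is precisely the condition $V(0,\beta)<c^*(p)$, and then use the non-increase of $V$ along the trajectory for $\eta\geq 0$ to preclude the solution from reaching or approaching $((1-p)^{1/(1-p)},0)$. The only cosmetic difference is that the paper cites Corollary~\ref{cor5} (which packages this Lyapunov argument as $(x(\eta),y(\eta))\in D_{c_\beta}$ for $\eta>0$), whereas you unpack the same reasoning directly from \eqref{15} and the explicit evaluation $V((1-p)^{1/(1-p)},y)=\tfrac12 y^2+c^*(p)$.
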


\begin{proof}
It follows from \eqref{7}-\eqref{13} that for $\beta$ satisfying the inequality \eqref{A7a}, then {\mbox{$(0,\beta )\in D_{c^*(p)}$.}} It then follows from Corollary \ref{cor5} that (S) with zero-value $(0,\beta )$ has a global solution which lies in $D_{c_\beta}$ for all $ \eta \in (0,\infty )$ with $c_\beta = V(0,\beta ) < c^*(p)$, and so the solution to (S) in $\eta \geq 0$ must satisfy case (ii). Therefore, $(0,\beta )\in E_2$, as required. 
\end{proof}

\noindent We next establish that both $E_1$ and $E_2$ are open subsets of $\partial \Omega_1$. 

\begin{prop} \label{proper}
The sets $E_1$ and $E_2$ are open subsets of $\partial \Omega_1$. 
\end{prop}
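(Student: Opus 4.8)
The plan is to obtain openness as an immediate consequence of the continuous dependence result already established in Corollary~\ref{cor3.6}, which carries essentially all of the analytic weight here (and which in turn rests on the genuinely delicate uniqueness Proposition~\ref{A2}, needed because ${\bf{Q}}$ fails to be locally Lipschitz near $\partial\Omega_1$). The crucial feature of Corollary~\ref{cor3.6} to exploit is that it asserts more than uniform closeness of perturbed trajectories on a common interval: it states that a solution launched from $(0,\beta)$ with $\beta$ near $\beta^*$ satisfies \emph{the corresponding} case among (i) and (ii). Openness of $E_1$ and $E_2$ then follows at once.

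Concretely, first I would fix an arbitrary $(0,\beta^*)\in E_1$. By the definition of $E_1$ together with Lemma~\ref{lem3.4}, the unique solution with zero-value $(0,\beta^*)$ satisfies case (i), so in particular it is defined on the finite interval $I=[0,\eta_{\beta^*}+\epsilon_{\beta^*}]$ and the hypotheses of Corollary~\ref{cor3.6} hold. Applying that corollary with any fixed $\epsilon'>0$ yields $\delta'>0$ such that every $\beta>0$ with $|\beta-\beta^*|<\delta'$ produces a unique solution again satisfying case (i), i.e.\ $(0,\beta)\in E_1$. Under the natural identification of $\partial\Omega_1$ with $\{\beta>0\}$ via $(0,\beta)\leftrightarrow\beta$, this is precisely an open neighborhood of $(0,\beta^*)$ contained in $E_1$; since $\beta^*$ was arbitrary, $E_1$ is open. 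The argument for $E_2$ is word-for-word identical, with case (ii) replacing case (i).

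The only place demanding attention --- and the closest thing to an obstacle --- is verifying that Corollary~\ref{cor3.6} genuinely preserves the \emph{type} of the trajectory rather than merely placing the perturbed solution somewhere in $E_1\cup E_2$. This is secured by the corollary's explicit phrasing (``the corresponding case (i) or (ii)'') together with the disjointness \eqref{dduu} of the $E_i$, so no further estimate is required. In short, all the difficulty was front-loaded into the uniqueness and continuous dependence machinery, and Proposition~\ref{proper} is a clean corollary of it; I would not expect to need any new computation.
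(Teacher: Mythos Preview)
Your proposal is correct and follows the same overall strategy as the paper: both rest on the continuous dependence result Corollary~\ref{cor3.6}. The difference is one of emphasis. You invoke the full conclusion of Corollary~\ref{cor3.6} as stated, in particular the clause that the perturbed solution ``satisfies the corresponding case (i) or (ii)'', and read off openness immediately. The paper's proof instead uses only the $\epsilon'$-closeness part of Corollary~\ref{cor3.6} and supplies the missing geometric step directly: it fixes $\epsilon'$ small enough that a tube of open balls of radius $\epsilon'$ about the trajectory $(x^*,y^*)$ meets $\partial\Omega\setminus\partial\Omega_1$ only along $\{x=(1-p)^{1/(1-p)},\,y>0\}$ and the terminal ball lies outside $\bar\Omega$, and then concludes that any trajectory confined to this tube must exit through the same face, hence is again in case~(i).

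Both arguments are valid. Your reading is economical and faithful to the corollary's wording; the paper's version is more self-contained, since the proof of Corollary~\ref{cor3.6} only cites the Coddington--Levinson continuous dependence theorem and does not itself justify the ``corresponding case'' assertion---that justification is effectively supplied by the tube argument in Proposition~\ref{proper}. So you are right that no new computation is needed, but be aware that the load-bearing geometric step lives somewhere, and in the paper it is here rather than in the corollary.
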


\begin{proof}
We will prove the result for $E_1$. The proof for $E_2$ is similar. Let $(0,\beta^*)\in E_1$. Then, via Lemma \ref{lem3.4}, (S) with zero-value $(0,\beta^*)$ has a unique solution {\mbox{$(x^*,y^*):[0,\eta_{\beta^*} + \epsilon_{\beta^*}]\to\field{R}^2$,}} with
\begin{equation} \label{n} (x^*(\eta ) ,y^*(\eta )) \in \Omega \ \ \ \forall \eta \in (0,\eta_{\beta^*}) \end{equation}
and
\begin{equation} \label{nn} (x^*(\eta_{\beta^*} ) ,y^*(\eta_{\beta^*} )) =((1-p)^{1/(1-p)},y_{\beta^*}) \end{equation}
for some $0 < y_{\beta^*}< \beta^*$, whilst
\begin{equation} \label{nnn} (x^*(\eta ) ,y^*(\eta )) \not\in \bar{\Omega} \ \ \ \forall \eta \in (\eta_{\beta^*}, \eta_{\beta^*}+ \epsilon_{\beta^*} ] . \end{equation}
Now consider the family of open balls 
\[ B(x^*(\eta ),y^*(\eta ) ;\epsilon ') \text{ with } \eta \in [0,\eta_{\beta^*} +\epsilon_{\beta^*} ] \]
and via \eqref{n}-\eqref{nnn}, choose $\epsilon '$ sufficiently small so that 
\begin{equation}\label{x} B(x^*(\eta_{\beta^*} + \epsilon_{\beta^*}), y^*(\eta_{\beta^*} + \epsilon_{\beta^*}); \epsilon ') \cap \bar{\Omega} = \emptyset \end{equation}
and
\begin{equation}\label{xx} \bigcup_{\lambda\in [0,\eta_{\beta^*}+\epsilon_{\beta^*}]} B (x^*(\lambda ), y^*(\lambda ); \epsilon ') \cap (\partial \Omega\ \backslash\ \partial \Omega_1) \subset \{ ((1-p)^{1/(1-p)},\lambda ) : \lambda >0 \} . \end{equation}
It then follows from Corollary \ref{cor3.6} that there exists $\delta ' >0$ such that the corresponding unique solution to (S) with zero-value $(0,\beta )\in \partial \Omega_1$, satisfying  $|\beta - \beta^*| < \delta '$, say {\mbox{$(x,y):[0,\eta_{\beta^*} + \epsilon_{\beta^*}]\to\field{R}^2$}} has 
\begin{equation} \label{xxx} (x(\eta ), y(\eta ))\in \bigcup_{\lambda\in [0,\eta_{\beta^*}+\epsilon_{\beta^*}]} B(x^*(\lambda ), y^*(\lambda ); \epsilon ') \ \ \ \forall \eta \in [0,\eta_{\beta^*} + \epsilon_{\beta^*}] \end{equation}
Therefore, via \eqref{x}-\eqref{xxx}, $\{ (0,\beta ): |\beta - \beta^*|< \delta ' \} \subseteq E_1$, and so $E_1$ is an open subset of $\partial \Omega_1$, as required.  
\end{proof}
 
\noindent Finally, we have 

\begin{cor} \label{corrie!}
The set $E_3$ is a non-empty closed subset of $\partial \Omega_1$.
\end{cor}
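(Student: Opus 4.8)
The plan is to exploit the topological structure established in the preceding propositions. Recall that $\partial\Omega_1 = \{(0,\beta) : \beta > 0\}$ is (via the parametrization $\beta \mapsto (0,\beta)$) homeomorphic to the connected set $(0,\infty) \subset \field{R}$. By \eqref{dduu} and \eqref{dduul}, the three sets $E_1$, $E_2$, $E_3$ form a partition of $\partial\Omega_1$, and by Proposition~\ref{proper} both $E_1$ and $E_2$ are open subsets of $\partial\Omega_1$. First I would observe that $E_3 = \partial\Omega_1 \setminus (E_1 \cup E_2)$, so that $E_3$ is the complement of an open set and is therefore \emph{closed} in $\partial\Omega_1$; this gives one of the two assertions immediately, with no further work.

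The substantive claim is that $E_3$ is \emph{non-empty}, and this is where the connectedness of $\partial\Omega_1$ enters. The key step is a proof by contradiction: suppose $E_3 = \emptyset$. Then \eqref{dduu} and \eqref{dduul} force $\partial\Omega_1 = E_1 \cup E_2$ with $E_1 \cap E_2 = \emptyset$, expressing the connected set $\partial\Omega_1$ as a disjoint union of two open sets. Since $E_1$ is non-empty (Proposition~\ref{A8}, which identifies large values of $\beta$ as lying in $E_1$) and $E_2$ is non-empty (Proposition~\ref{A7}, which identifies small values of $\beta$ as lying in $E_2$), this would be a separation of $\partial\Omega_1$ into two non-empty disjoint open sets. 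This contradicts the connectedness of $\partial\Omega_1 \cong (0,\infty)$. Hence $E_3 \neq \emptyset$, as required.

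I expect the main obstacle to be essentially notational rather than mathematical: one must be careful that ``open in $\partial\Omega_1$'' is interpreted in the subspace topology (which is exactly what Proposition~\ref{proper} provides) and that the homeomorphism $\beta \mapsto (0,\beta)$ transports the connectedness of $(0,\infty)$ faithfully to $\partial\Omega_1$. Both points are routine, so the corollary follows cleanly from the connectedness argument once the partition, the openness of $E_1, E_2$, and their non-emptiness are assembled. It is worth noting for the record that any $(0,\beta) \in E_3$ satisfies case (iii), and hence, via the continuation trichotomy and \eqref{2'c}--\eqref{2'd}, yields a solution of (S) realizing the sought heteroclinic connection from $(0,0^+)$ approaching $((1-p)^{1/(1-p)}, 0)$; this is precisely why the non-emptiness of $E_3$ is the goal of the section.
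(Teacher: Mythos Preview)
Your proposal is correct and follows essentially the same approach as the paper: both argue that $E_3=\partial\Omega_1\setminus(E_1\cup E_2)$ is closed as the complement of an open set, and both obtain non-emptiness by contradiction from the connectedness of $\partial\Omega_1$ together with the openness (Proposition~\ref{proper}) and non-emptiness (Propositions~\ref{A8} and~\ref{A7}) of $E_1$ and $E_2$. The only difference is cosmetic ordering---you prove closedness first, the paper proves it last.
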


\begin{proof}
Via Propositions \ref{A8} and \ref{A7}, $E_1$ and $E_2$ are both nonempty subsets of $\partial \Omega_1$. Moreover, via \eqref{dduu} $E_1$ and $E_2$ are disjoint. Suppose that $E_3$ is empty, then via \eqref{dduul} and Proposition \ref{proper}, $E_1$ and $E_2$ form an open partition of $\partial \Omega_1$. However, $\partial \Omega_1$ is a connected subset of $\field{R}^2$, and we arrive at a contradiction. Hence $E_3$ must be nonempty. Finally, $E_3=\partial \Omega_1 \backslash (E_1 \cup E_2)$ and is therefore a closed subset of $\partial \Omega_1$.  
\end{proof}

\begin{rmk}
In Corollary \ref{corrie!}, the existence of at least one point in $E_3$ has been established. However, it has not been established that this is the only point in $E_3$. 
\end{rmk}

\noindent To conclude this section, we arrive at our main result, namely,

\begin{thm} \label{mainowino}
There exists a solution $(x,y):\field{R}\to \field{R}^2$ to (S) with zero-value $(0,\beta )\in \partial \Omega_1$, for some 
\[ \sqrt{\frac{(1-p)^{2/(1-p)}}{(1+p)}} \leq \beta \leq \sqrt{2\left( ((1-p)^{1/(1-p)} - m_H)^2 - m_H^2 \right)} , \]
which satisfies
\begin{equation} \label{XOman} (x(\eta ),y(\eta )) \to (\pm (1-p)^{1/(1-p)},0) \text{ as } \eta \to \pm\infty \end{equation}
and
\begin{equation} \label{XOman'} |x(\eta ) | < (1-p)^{1/(1-p)},\ \ \ 0 < y(\eta ) \leq \beta \ \ \ \forall \eta \in \field{R}. \end{equation}
\end{thm}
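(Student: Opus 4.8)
The plan is to take a point $(0,\beta)\in E_3$, whose existence is guaranteed by Corollary \ref{corrie!}, and to build the required global solution by reflecting the case (iii) solution on $[0,\infty)$ into $\eta<0$. First I would fix $(0,\beta)\in E_3$. Since $E_1,E_2,E_3$ are pairwise disjoint by \eqref{dduu}, the point $(0,\beta)$ lies in neither $E_1$ nor $E_2$; Propositions \ref{A7} and \ref{A8} then force $\beta$ into the stated interval, because every $\beta$ below the lower bound belongs to $E_2$ and every $\beta$ above the upper bound belongs to $E_1$. By the definition of $E_3$ (case (iii)) together with Lemma \ref{lem3.4}, the unique solution $(x,y):[0,\infty)\to\field{R}^2$ with zero-value $(0,\beta)$ satisfies $(x(\eta),y(\eta))\in\Omega$ for all $\eta\in(0,\infty)$ and $(x(\eta),y(\eta))\to((1-p)^{1/(1-p)},0)$ as $\eta\to\infty$.

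Next I would exploit the reflection symmetry of \eqref{3}--\eqref{4}. A direct substitution shows that if $(x(\eta),y(\eta))$ solves (S) on $[0,\infty)$ with zero-value $(0,\beta)$, then $(\tilde x(\eta),\tilde y(\eta)):=(-x(-\eta),y(-\eta))$ solves \eqref{3}--\eqref{4} on $(-\infty,0]$ with the same zero-value, since the terms odd in $x$ and the transport term $-\tfrac12\eta y$ each transform consistently. I would therefore define the extension $(x(\eta),y(\eta)):=(-x(-\eta),y(-\eta))$ for $\eta<0$. A short check confirms that at $\eta=0$ the two branches agree both in value (namely $(0,\beta)$) and in derivative, using $x'(0)=y(0)=\beta$ and $y'(0)=H(0)=0$, so that the glued function lies in $C^1(\field{R})\cap L^\infty(\field{R})$ and solves (S) on all of $\field{R}$. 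The limit \eqref{XOman} as $\eta\to-\infty$ is then immediate from the reflection and the case (iii) limit.

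Finally I would verify the bounds \eqref{XOman'}. For $\eta>0$, containment in $\Omega$ gives $0<x(\eta)<(1-p)^{1/(1-p)}$ and $y(\eta)>0$; to obtain $y(\eta)\leq\beta$ I would note that $H(x)<0$ for $0<x<(1-p)^{1/(1-p)}$, so that $y'(\eta)=H(x(\eta))-\tfrac12\eta y(\eta)<0$ throughout $(0,\infty)$, whence $y$ is strictly decreasing and $y(\eta)<y(0)=\beta$. The reflection transfers these bounds to $\eta<0$, with $x$ now in $(-(1-p)^{1/(1-p)},0)$ and $y>0$ unchanged, while at $\eta=0$ one has $y=\beta$. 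Combining the three regimes yields $|x(\eta)|<(1-p)^{1/(1-p)}$ and $0<y(\eta)\leq\beta$ for all $\eta\in\field{R}$.

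I expect the main, though modest, obstacle to be the gluing step: one must check carefully that the reflected branch matches the forward branch to first order at $\eta=0$, so that the concatenation is a genuine $C^1$ solution of the non-autonomous system across the singular line $x=0$, rather than merely a continuous patch. Everything else reduces to reading off the definition of $E_3$ and a sign analysis of $H$.
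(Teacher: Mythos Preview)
Your proposal is correct and follows essentially the same route as the paper: pick $(0,\beta)\in E_3$ via Corollary~\ref{corrie!}, use Propositions~\ref{A7} and~\ref{A8} to pin down the range of $\beta$, and extend the case~(iii) solution to $\eta<0$ by the odd reflection $(x(\eta),y(\eta))=(-x(-\eta),y(-\eta))$. You are in fact slightly more explicit than the paper at two points---the $C^1$ gluing at $\eta=0$ (where $y'(0)=H(0)=0$) and the derivation of $y'(\eta)<0$ from $H(x)<0$ on $(0,(1-p)^{1/(1-p)})$---but these are exactly the details underlying the paper's terser statements.
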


\begin{proof}
It follows directly from Corollary \ref{corrie!} and (iii) that there exists $(x^*,y^*):[0,\infty )\to\field{R}^2$ which is a solution to (S) with zero-value $(0,\beta^*)$, such that 
\begin{align}
\label{XOman1} & (x^*(\eta ),y^*(\eta ))\to ((1-p)^{1/(1-p)},0) \text{ as } \eta \to \infty , \\
\label{117*} &  (x^*(\eta ),y^*(\eta )) \in \Omega \ \ \ \forall \eta \in (0,\infty ) . 
\end{align}
It follows from \eqref{A7a} and \eqref{A8d}, that  
\[  \sqrt{\frac{(1-p)^{1/(1-p)}}{(1+p)}}\leq \beta^* \leq \sqrt{2\left( ((1-p)^{1/(1-p)} - m_H)^2 - m_H^2 \right)} .\]
Now, define the function $(x,y):\field{R}\to\field{R}^2$ to be
\begin{equation} \label{XOman1'} (x(\eta ), y(\eta )) = \begin{cases} (x^*(\eta ),y^*(\eta )) &; \eta \in [0,\infty ) \\ (-x^*(-\eta ),y^*(-\eta )) &; \eta \in (-\infty , 0) .\end{cases}  \end{equation}%
It follows from \eqref{XOman1'} that $(x,y):\field{R}\to\field{R}^2$ is a solution to (S) with zero-value $(0,\beta^* )$, and via \eqref{1'} and (iii), (since $y (\eta )$ is monotone decreasing for $\eta \in (0,\infty )$) that this solution satisfies \eqref{XOman} and \eqref{XOman'}.
\end{proof}

We conclude from Theorem \ref{mainowino} that the problem (S) has at least one heteroclinic connection from the equilibrium point $(-(1-p)^{1/(1-p)},0)$ ($\eta = -\infty$) to the equilibrium point $((1-p)^{1/(1-p)},0)$ ($\eta = \infty$), which we denote by $w_{\beta^*}:\field{R}\to\field{R}$. Here $w=w_{\beta^*}(\eta )$, $\eta \in \field{R}$, has zero-value $w(0)=0$, $w'(0)=\beta^*$ for some 
\[ \sqrt{\frac{(1-p)^{2/(1-p)}}{(1+p)}} \leq \beta^* \leq \sqrt{2\left( ((1-p)^{1/(1-p)} - m_H)^2 - m_H^2 \right)} , \]
and 
\[ |w_{\beta^*}(\eta )| < (1-p)^{1/(1-p)},\ \ \ 0 < w_{\beta^*}'(\eta ) \leq \beta^* \ \ \ \forall \eta \in \field{R},\]
recalling also, that $w_{\beta^*}(\eta )$ is an odd function of $\eta\in\field{R}$. Finally, a straightforward linearization as $|\eta |\to\infty$ establishes that, 
\[ w_{\beta^*}(\eta ) \sim \pm (1-p)^{1/(1-p)} - \frac{A_\infty}{\eta^3}e^{-\frac{1}{4}\eta^2}\ \text{ as }\ \eta \to \pm \infty ,\]
with $A_\infty$ being a globally determined constant.

\subsection{Front Solutions to [CP]}  \label{sec42}
Following Theorem \ref{mainowino}, with $\beta = \beta^*$ we have constructed the front-like global solution $u_{\beta^*}:\field{R}\times [0,\infty )\to\field{R}$ to [CP], namely, 
\begin{equation} \label{noidea} u_{\beta^*}(x,t) = \begin{cases} t^{\frac{1}{(1-p)}} w_{\beta^*} \left( \frac{x}{t^{1/2}}\right) &, (x,t)\in \field{R}\times (0,\infty ) \\ 0 &, (x,t)\in \field{R} \times \{ 0 \} . \end{cases} \end{equation}
We again observe that $u_{\beta^*}(x-x_0,t)$ is also a global solution to [CP] for any fixed $x_0\in\field{R}$. In addition, following Section 3.3, we conclude that, for any $\tau >0$, $u_{\beta^*}^\tau :\field{R}\times [0,\infty )\to\field{R}$ such that
\[ u_{\beta^*}^\tau (x,t) = \begin{cases} (t-\tau )^{\frac{1}{(1-p)}} w_{\beta^*}\left( \frac{x}{(t-\tau )^{1/2}} \right) &, (x,t) \in\field{R} \times (\tau , \infty ) \\ 0 &, (x,t) \in \field{R}\times [0,\tau ] \end{cases} \]
is also a front-like global solution to [CP].

\section{Discussion} \label{sec5}
There are two questions that arise naturally from this study. The first being how one can rigorously establish the decay rate of the homoclinic solutions $w:\field{R}\to\field{R}$ to (S) as $\eta\to\pm\infty$, that is suggested by \eqref{d11} and \eqref{es1}; the second being whether or not for the problem (S), there is a unique heteroclinic connection from the equilibrium point $(-(1-p)^{1/(1-p)},0)$ to the equilibrium point $((1-p)^{1/(1-p)},0)$ which has zero value in $\partial \Omega_1$ (Theorem \ref{mainowino} guarantees that there exists at least one connection).


\newcommand{\Addresses}{{
  \bigskip
  \footnotesize

  J. C.~Meyer, \textsc{School of Mathematics, Watson Building, University of Birmingham, Birmingham, UK, B15 2TT}\par\nopagebreak
  \textit{E-mail address}: \texttt{J.C.Meyer@bham.ac.uk}

  \medskip

  D. J.~Needham, \textsc{School of Mathematics, Watson Building, University of Birmingham, Birmingham, UK, B15 2TT}\par\nopagebreak
  \textit{E-mail address}: \texttt{D.J.Needham@bham.ac.uk}

	}}
	
	\Addresses

\end{document}